\titleformat{\chapter}[display]{\bfseries\huge}{\filright\huge\chaptertitlename~\thechapter}{3ex}{\titlerule\vspace{1ex}\filright}[\vspace{1ex}\titlerule]
\newtheorem{theorem}{Theorem}[section]
\newtheorem{prop}[theorem]{Proposition}
\newtheorem{lemma}[theorem]{Lemma}
\newtheorem{corollary}[theorem]{Corollary}
\newtheorem{conjecture}[theorem]{Conjecture}
\theoremstyle{definition}
\newtheorem{definition}{Definition}[section]
\theoremstyle{definition}
\theoremstyle{remark}
\newtheorem{remark}[theorem]{Remark}
\newenvironment{feqn*}{\begin{mdframed}\begin{equation*}}{\vspace{1mm}
\end{equation*}\end{mdframed}}
\numberwithin{equation}{section}
\newcommand{\N}{\mathbb{N}}
\newcommand{\Z}{\mathbb{Z}}
\newcommand{\F}{\mathbb{F}}
\newcommand{\CA}{\mathcal{A}}
\newcommand{\CB}{\mathcal{B}}
\newcommand{\CD}{\mathcal{D}}
\newcommand{\CH}{\mathcal{H}}
\newcommand{\CM}{\mathcal{M}}
\newcommand{\CN}{\mathcal{N}}
\newcommand{\CP}{\mathcal{P}}
\newcommand{\CS}{\mathcal{S}}
\DeclareMathOperator{\prob}{{\bf Prob}}
\DeclareMathOperator{\lcm}{lcm}
\newcommand{\bs}\boldsymbol{}
\renewcommand{\geq}{\geqslant}
\renewcommand{\leq}{\leqslant}
\renewcommand{\tilde}{\widetilde}
\renewcommand{\mod}[1]{\,({\rm mod}\,#1)}
\newcommand{\vect}[1]{\overrightarrow{\boldsymbol{#1}}}
\definecolor{blue}{rgb}{.2,.6,.75}
\definecolor{green}{rgb}{.4,.7,.4}
\definecolor{red}{rgb}{1,0,0}
\titleformat{\section}[block]{\scshape\centering}{\arabic{section}.}{1ex}{}{}
\begin{document}




\title[On The Moments of the Number of Representations as Sums of Two Prime Squares]{On The Moments of the Number of Representations as Sums of Two Prime Squares}

\author{Cihan Sabuncu}

\address{D\'epartement de math\'ematiques et de statistique\\
Universit\'e de Montr\'eal\\
CP 6128 succ. Centre-Ville\\
Montr\'eal, QC H3C 3J7\\
Canada}


\email{cihan.sabuncu@umontreal.ca}

\subjclass[2010]{}

\date{\today}

\begin{abstract}
We study the moments of the function that counts the number of representations of an integer as sums of two prime squares. We refine some of the previous arguments and apply the Selberg sieve to get an unconditional upper bound for all moments. We also prove a lower bound for all moments conditional on some generalization of the Green-Tao theorem on linear equations in primes. More precisely, for the fifth moment and onward, we get the expected order of magnitude lower and upper bounds. In addition, we provide some heuristics on the mass function of this representation function.
\end{abstract}

\maketitle

\section{Introduction}

Let $\mathcal{P}$ denote the set of all primes. For $n\in \N $, we define the representation as sum of two squares function in the first quadrant
$$ r_0(n) :=\#\{ (a,b)\in \Z^2 : n=a^2+b^2 , a\geq 0 , b>0 \} .$$
Then, we know that
\begin{equation}\label{number_of_representations}
\#\{(u,v)\in \Z^2 : u^2+v^2 = n\} = 4 \cdot r_0(n) ,
\end{equation}
and
$$ r_0(n)= \sum_{d|n} \chi_4(d), $$
where $\chi_4$ is the non-principal Dirichlet character $\bmod$ 4. On restricting both of the squares to be squares of primes we set
$$r_2(n) := \# \{ (p,q)\in \mathcal{P}^2 : p^2+q^2=n \} . $$
We are interested in understanding how these functions behave. For instance, by classical complex analytic techniques, one can show, for any $k\in \mathbb{N}$,
\begin{equation}
\label{Perron_Moments}
\sum_{n\leq x} r_0^k (n) = a_k x (\log x)^{2^{k-1}-1} + O_k \big( x (\log x)^{2^{k-1}-2} \big),
\end{equation}
for some explicit $a_k >0$. A more general result due to Blomer-Granville \cite{MR2267284} gives a uniform formula for the representation function of any binary quadratic form.

Hence, we know how $r_0(n)$ is distributed on $\N$. We are interested in the analogous effect for $r_2(n)$. However, in the case of $r_2$, we are not dealing with a multiplicative function and classical tools can not be applied directly. We study the moments of this function and get upper and lower bounds. In addition, we give a detailed heuristic for the mass function of a related representation function.

\noindent The first moment of $r_2$ can be computed by using the Prime Number Theorem:
\begin{equation} \label{first_moment}
\sum_{n\leq x} r_2(n) = \frac{\pi x}{\log^2 x} + O \bigg( x \frac{\log\log x}{\log^3 x} \bigg) ;
\end{equation}
see Plaksin \cite[Lemma 11]{MR616225} and Li \cite[Lemma 3.3]{MR4054578}.

\noindent For higher moments, previous results relied on obtaining upper bounds for the cardinality of the set
$$ \CD_k (x) := \{ (\vect{p} , \vect{q})\in (\CP\cap [0,\sqrt{x}])^{2k}: p_i^2+q_i^2=p_j^2+q_j^2 , \{ p_i , q_i \}\not= \{p_j , q_j\}, \forall i\not=j \}, $$
which we call the \emph{non-diagonal solutions of level $k$}. Thence, the $k$-th moment of $r_2$ can be written as a linear combination of the number of non-diagonal solutions of every level $\leq k$.

\noindent For instance, the first result on the non-diagonal solutions was proved by Erd\H{o}s \cite{Erdos1938}, who obtained that:
$$ \CD_2(x) \ll x \frac{(\log\log x)^5}{\log^3 x} , $$
thus obtaining an asymptotic formula for the second moment (using $\eqref{first_moment}$)
$$
\sum_{n\leq x} r_2^2(n) =  \frac{2\pi x}{\log^2 x} + O \bigg( x \frac{(\log\log x)^5}{\log^3 x} \bigg) .
$$
Rieger \cite{MR229603} improved on Erd\H{o}s' result, obtaining a better upper bound for the set of non-diagonal solutions of level 2,
$$ \CD_2(x) \ll \frac{x}{\log^3 x}. $$
We show that this is the correct order of magnitude for this set.

\noindent More recently, Blomer-Brüdern \cite{MR2418801} got bounds on $\CD_3(x)$,
$$ \CD_3(x) \ll x \frac{(\log \log x)^6}{\log^3 x} , $$
and obtained the third moment
\begin{equation} \label{third_moment}
\sum_{n\leq x} r_2^3(n) = \frac{4\pi x}{\log^2 x} + O \bigg( x \frac{(\log\log x)^6}{\log^3 x} \bigg) . 
\end{equation}
They also state their methods are susceptible to an improvement following the argument of Rieger \cite{MR229603}. We give an improvement on the power of $\log\log x$ with our method.

\begin{remark}
The result of Blomer-Brüdern \cite{MR2418801} relies on the arithmetic of $\Z[i]$ by turning the non-diagonal solutions of level $k$ into studying multiple-variable equations in $\Z[i]$. In our case, we study the non-diagonal solutions of level $k$ by turning the initial problem to a system of linear forms and a quadratic form being prime in $\Z$. This allows us to use the Selberg sieve bounds and makes it easier to get bounds for all moments. To reduce the appearance of double-logarithmic factors we split our sum into small, medium, and large prime ranges. For small primes, we show that the contribution is negligible, for large primes, we use the Selberg sieve and a result of Henriot \cite{MR2911138} to get the upper bound. For medium primes, we do an $e$-adic decomposition and study each interval in the same way we study the large prime range.

\noindent We also note that the previous work capitalized on the fact that the diagonal solutions dominate the other solutions, this effect is called ``paucity", and that's why in these cases we have asymptotic formulas. However, the heuristic argument we give in Section $\ref{heuristic_section}$ shows that for $k\geq 4$, we expect the non-diagonal solutions of level $k$ to be the main term. Asymptotic formulas for general moments cannot be obtained by our method.
\end{remark}

\noindent To study the moments of $r_2$, we define
$$ R_2(n) := \#\{(p,q)\in \CP^2 : p<q , p^2+q^2=n\}. $$ 
Then we have 
\begin{equation}\label{splitting}
r_2(n)=2R_2(n)+1_{n\in 2\cdot \CP^2}.
\end{equation}
Notice that
\begin{align}
\sum_{n\leq x}r_2^k(n) &= 2^k\sum_{n\leq x} R_2^k(n) +O_k \bigg( \sum_{p\leq \sqrt{x/2}} R_2^{k-1}(2p^2) \bigg) \nonumber \\
&= 2^k \sum_{n\leq x} R_2^k(n) + O_k \bigg( \frac{\sqrt{x}}{\log x} \bigg), \label{r_2_to_R_2}
\end{align}
since $R_2(2p^2)\leq 4$, and $\sum_{n\leq x}1_{n \in 2\cdot \CP^2}\ll \sqrt{x}/\log x$.

\noindent We begin our study of the moments of $r_2^k$ by
\begin{equation}
\label{diagonal}\sum_{n\leq x} R_2^k(n) = \sum_{n\leq x} \sum_{\ell=1}^{k}\genfrac{\{}{\}}{0pt}{}{k}{\ell} R_2(n)(R_2(n)-1)\cdots (R_2(n)-\ell+1) = \sum_{\ell=1}^k  \genfrac{\{}{\}}{0pt}{}{k}{\ell} \CS_{\ell},
\end{equation}
where $\genfrac{\{}{\}}{0pt}{}{k}{\ell}$ are the Stirling numbers of the second kind and
$$\CS_{\ell} := \ell ! \sum_{n\leq x} \binom{R_2(n)}{\ell}.$$
Notice here that this sum is only supported on integers with $R_2(n)\geq \ell$.

\noindent Hence, studying $R_2^k$ relies on studying $\mathcal{S}_{\ell}$ for all $\ell\leq k$. We will show later that the main contribution for the $k$-th moment of $r_2$ comes from the term with $\ell=k$ for $k\geq 4$. Hence, on using $\eqref{r_2_to_R_2}$, we see that the bounds for $\CS_k$ would produce the corresponding bounds for $\CD_k(x)$.

\noindent We can write $\CS_k$ as a sum over representations in the following way
\begin{equation}\label{the_single_sum}
\CS_k = \sum_{\substack{n\leq x}} \prod_{i=0}^{k-1}(R_2(n)-i) = \sum_{(\vect{p} , \vect{q})\in \CH_k(x)} 1 .
\end{equation}
where
\begin{equation}\label{S_k_definition}
\CH_k(x) = \{ (\vect{p} , \vect{q})\in \CP^{2k} : p_i^2+q_i^2=p_j^2+q_j^2 \leq x , (p_i,q_i)\not=(p_j,q_j), \forall i\not=j ,  p_i < q_i , \forall i\leq k \} . 
\end{equation}

\noindent In this paper, we provide unconditional upper bound and conditional lower bound for $\mathcal{S}_k$ for any $k\geq 2$, and hence the corresponding bounds for $r_2^k$. The lower bounds are conditional subject to a quadratic extension of the Green-Tao \cite{MR2680398} theorem on linear equations in primes.
\begin{theorem} \label{main_result}
Let $k\in \mathbb{Z}_{\geq 2}$ and $x$ be a large number. Then,
\begin{align*}
\frac{x}{\log^3 x} \ll\, &\CS_2 \ll x \frac{(\log \log x)^2}{\log^3 x}, \\
\frac{x}{\log^3 x} \ll\, &\CS_3 \ll x \frac{(\log\log x)^2}{\log^3 x}, \\
\frac{x}{\log x} \ll\, &\CS_4 \ll x\frac{\log\log\log x}{\log x},
\end{align*}
and
$$\CS_k \asymp_k x(\log x)^{2^{k-1}-2k-1} \quad \text{ for } k\geq 5, $$
where the lower bounds are conditional on Conjecture $\ref{conjectural_lower_bound}$.
\end{theorem}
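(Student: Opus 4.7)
The plan is to reduce the count $\CS_k$ to a problem about simultaneous primality of several linear forms, apply the Selberg sieve (in Henriot's \cite{MR2911138} formulation) for the upper bound, and invoke the conjectural extension of Green--Tao for the lower bound. The starting point is the Brahmagupta--Fibonacci parameterization
\[
(kl + mn)^2 + (km - ln)^2 \;=\; (kl - mn)^2 + (km + ln)^2 \;=\; (k^2 + n^2)(l^2 + m^2),
\]
which describes all coincidences $p_1^2 + q_1^2 = p_2^2 + q_2^2$ by quadruples $(k, l, m, n) \in \Z^4$. Given $(\vect{p}, \vect{q}) \in \CH_k(x)$, I would fix a base representation $(p_1, q_1)$ and express each of the remaining $k-1$ pairs through four auxiliary parameters, so that the $2k$ primes $p_i, q_i$ become values of explicit (bi)linear forms in the auxiliary variables, and the constraint $p_i^2 + q_i^2 \leq x$ confines these variables to an explicit box.

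For the upper bound, I plan to apply Henriot's sieve inequality to the simultaneous primality of these $2k$ forms. Holding the ``outer'' shape parameters fixed so that each form is linear in the remaining free variables, the count of free-variable tuples for which all $2k$ forms are prime is controlled by (box volume)$\,\times\,$(singular series)$\,\times\,(\log x)^{-2k}$. Summing this bound over the shape parameters, weighted by the singular series, should yield the expected divisor-like log-power $(\log x)^{2^{k-1}-1}$ (the same power that governs the moments of $r_0$), which combined with the sieve's $(\log x)^{-2k}$ gives the target exponent $2^{k-1} - 2k - 1$. The delicate point is that performing the sum over shape parameters naïvely produces spurious $(\log\log x)^{O(1)}$ factors, and the main task is to remove these for $k \geq 5$.

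To avoid the double-logarithmic losses I would partition the outer-parameter ranges into small, medium, and large prime scales as indicated in the remark. On the small-prime range a direct estimate should show the contribution is negligible, as the arithmetic constraints confine the parameters to thin sets. On the large-prime range a single application of the sieve with Henriot's multiplicative-function bound should give the correct order. On the medium range I would perform an $e$-adic decomposition, estimating each geometric block as in the large-prime case and summing the resulting geometric series; the geometric sum absorbs what would otherwise be an iterated logarithm. For the lower bound, I would run the argument in reverse: restrict to a sub-box of the parameter space where all $2k$ linear forms have sizes comparable to $\sqrt{x}$ and their coefficients satisfy the hypotheses of Conjecture \ref{conjectural_lower_bound}; the conjecture then produces an asymptotic lower bound of matching order.

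I expect the principal obstacle to be the medium-prime analysis: the coefficients of the linear forms depend on the dyadic range being considered, so each block requires a separate Selberg-sieve input whose dependence on the shape parameters must be tracked uniformly, and the accumulated error terms must be shown to behave as a pure geometric series rather than a logarithmically divergent sum. A secondary issue is verifying the local-solubility hypotheses of Conjecture \ref{conjectural_lower_bound} for the specific linear systems arising from the parameterization, since the $2k$ forms are highly correlated, descending from only four auxiliary variables per pairing with the base representation.
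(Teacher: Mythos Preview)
Your high-level architecture (split by the size of the largest prime factor, apply a Selberg-type sieve on the large range, $e$-adic decomposition on the medium range, invoke the conjecture for the lower bound) matches the paper. The crucial structural choice, however, is different, and the way you describe it would not work as stated.

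The paper does \emph{not} fix a base representation $(p_1,q_1)$ and attach four auxiliary parameters to each of the remaining $k-1$ pairs. Instead it factors out the \emph{largest prime divisor} $p'=P^+(n)$: writing $p'=r^2+s^2$ and $N=n/p'$, Gaussian primality of $r+is$ forces $p_j+iq_j=(m_j+in_j)(r+is)$ with $m_j^2+n_j^2=N$ for every $j$. Thus all $2k$ primes become \emph{linear} forms in the single pair $(r,s)$, with coefficients $(m_j,n_j)$ ranging over the representations of $N$; together with the quadratic condition $r^2+s^2\in\CP$ this gives $2k+1$ forms and the sieve exponent $-(2k+1)$. Your ``four auxiliary variables per pairing with the base representation'' is over-parameterized: once $(p_1,q_1)$ is fixed, $n$ is fixed and the remaining representations are determined, so those quadruples are not free variables you can sieve over. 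You would need to explain how the $4(k-1)$ parameters collapse to a genuine two-variable problem, and the natural way to do that is precisely the largest-prime-factor trick.

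A second point: Henriot's theorem is not a sieve for simultaneous primality. In the paper the Selberg sieve is applied directly (the standard $2k+1$-dimensional bound) to obtain $f_k(z,(\vect m,\vect n))\ll_k \mathfrak S(m_1,\dots,n_k)\,z/(\log z)^{2k+1}$; Henriot's result enters only afterwards, to control averages of $r_0^{k}$ along the quadratic polynomial $a^2+b^2$ when summing the singular series over the shape parameters $(\vect m,\vect n)$. The genuinely delicate part of the upper bound---which your proposal does not touch---is this singular-series sum: one must show $\sum_{(\vect m,\vect n)} \mathfrak S(m_1,\dots,n_k)/N \ll_k (\log x)^{2^{k-1}}$. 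The paper achieves this via an explicit pointwise bound $\mathfrak S\ll_k \prod_{p\mid N\cdot R(\vect m,\vect n)}(1+2k/p)$, a Cauchy--Schwarz ``condition separation'' over the factors $R(m_i,m_j,n_i,n_j)$ of $R(\vect m,\vect n)$, and a further Gaussian-integer change of variables that linearizes each factor. Your remark that ``the geometric sum absorbs what would otherwise be an iterated logarithm'' is correct in spirit for $k\ge 5$, but the exponent arithmetic only closes because the outer sum contributes $(\log x)^{2^{k-1}}$ and the sieve contributes $(\log x)^{-(2k+1)}$, not $2^{k-1}-1$ and $-2k$ as you wrote; the extra $-1$ comes from the quadratic form $r^2+s^2$.

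For the lower bound, the admissibility issue you flag is real. The paper handles it not by checking local solubility directly for the full system, but by first restricting to squarefree $N$ with many representations and then invoking a combinatorial lemma (Lemma~\ref{admissible_lower_bound}) which extracts, from any $k$ representations of such an $N$, a sub-tuple of size $\gg k/\log^2 k$ that is admissible in the sense of Definition~\ref{admissibility}. This is a separate argument you would need to supply.
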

\noindent In Section $\ref{heuristic_section}$, we give a heuristic reasoning for the size of $\CS_k$, and it tells us that these double and triple logarithmic factors should not be there. Theorem $\ref{main_result}$ implies immediately by $\eqref{r_2_to_R_2}$ and $\eqref{diagonal}$ the following result.
\begin{corollary} \label{result_for_r_2}
Let $k\in \Z_{\geq 3}$ and $x\to\infty$. Then,
\begin{gather*}
\sum_{n\leq x} r_2^3(n) = \frac{4\pi x}{\log^2 x} + O\bigg( x \frac{(\log\log x)^2}{\log^3 x} \bigg), \\
\frac{x}{\log x} \ll \sum_{n\leq x} r_2^4(n) \ll x \frac{\log\log\log x}{\log x}, \\
\sum_{n\leq x} r_2^k(n) \asymp_k x(\log x)^{2^{k-1}-2k-1} \quad \text{ for } k\geq 5,
\end{gather*}
where the lower bounds are conditional on Conjecture $\ref{conjectural_lower_bound}$.
\end{corollary}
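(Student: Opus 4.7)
The plan is purely bookkeeping: combine \eqref{r_2_to_R_2} and \eqref{diagonal} with Theorem \ref{main_result}, then identify the dominant $\CS_\ell$ in the Stirling linear combination $\sum_{\ell=1}^k\genfrac{\{}{\}}{0pt}{}{k}{\ell}\CS_\ell$ in each regime. The $O_k(\sqrt{x}/\log x)$ error absorbed by \eqref{r_2_to_R_2} is negligible relative to the stated error in every case below.

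For $k=3$ the expansion is $\CS_1+3\CS_2+\CS_3$; the first term is evaluated asymptotically using \eqref{first_moment} combined with \eqref{splitting} to obtain $\CS_1=\tfrac{\pi x}{2\log^2 x}+O(x\log\log x/\log^3 x)$, while Theorem \ref{main_result} absorbs $\CS_2$ and $\CS_3$ into $O(x(\log\log x)^2/\log^3 x)$. Multiplication by $2^3$ then yields the claimed main term $4\pi x/\log^2 x$ with the stated error. For $k=4$, the expansion is $\CS_1+7\CS_2+6\CS_3+\CS_4$, and $\CS_1,\CS_2,\CS_3$ are each strictly smaller than the range $x/\log x\ll\CS_4\ll x\log\log\log x/\log x$ guaranteed by Theorem \ref{main_result}, so the fourth moment inherits the $\CS_4$ bounds after multiplication by $2^4$.

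For $k\geq 5$ the same mechanism applies, but now with $\CS_k\asymp_k x(\log x)^{2^{k-1}-2k-1}$ supplying the dominant contribution. Setting $f(\ell)=2^{\ell-1}-2\ell-1$, the discrete derivative $f(\ell+1)-f(\ell)=2^{\ell-1}-2$ is positive whenever $\ell\geq 3$, so $f$ is strictly increasing on $\{3,4,5,\dots\}$; in particular every $\CS_\ell$ with $5\leq \ell<k$ satisfies $\CS_\ell\ll x(\log x)^{f(k-1)}\ll x(\log x)^{f(k)}/\log x$, while the boundary terms $\ell\in\{1,2,3,4\}$ contribute at most $x\log\log\log x/\log x$, which is already much smaller than $x\log^5 x\leq x\log^{f(k)} x$. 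Hence $2^k\CS_k$ controls the sum, giving the stated $\asymp_k$.

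There is no real obstacle in the argument; the only delicate point is confirming the monotonicity of $f$ on $\{3,4,\dots\}$, which ensures that no subdominant $\CS_\ell$ is resurrected by its Stirling coefficient, so that the heuristic identification "main term $=\CS_k$" actually survives the summation in \eqref{diagonal}.
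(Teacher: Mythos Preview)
Your proposal is correct and follows exactly the route the paper indicates (``implies immediately by \eqref{r_2_to_R_2} and \eqref{diagonal}''), merely filling in the bookkeeping that the paper leaves implicit. The identification of $\CS_1$ via \eqref{first_moment} and \eqref{splitting}, the casework on $k=3,4$, and the monotonicity check $f(\ell+1)-f(\ell)=2^{\ell-1}-2>0$ for $\ell\geq 3$ to ensure $\CS_k$ dominates when $k\geq 5$ are all sound; the Stirling coefficients are $O_k(1)$ and are absorbed into the implied constants, and the lower bounds use $\CS_\ell\geq 0$ to drop all but the $\ell=k$ term.
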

This improves on the result of Blomer-Brüdern \cite{MR2418801} for the third moment $\eqref{third_moment}$, and the bounds for $k\geq 4$ are, to the author's knowledge, the first of their kind.

\noindent The exponent grows very fast with $k$, see the following table.
\begin{table}[h!]
\caption{Growth of the exponent}
\label{growth_of_exponent}
\centering
\begin{tabular}{|c|c|c|c|c|c|c|c|c|c|c|c|c|c|c|}
\hline
$k$ & 1 & 2 & 3 & 4 & 5 & 6 & 7 & 8 & 9 & 10 & 11 & 12 & 13 & 14 \\
\hline
$2^{k-1}-2k-1$ & -2 & -3 & -3 & -1 & 5 & 19 & 49 & 111 & 237 & 491 & 1001 & 2023 & 4069 & 8163 \\
\hline
\end{tabular}
\end{table}

\begin{remark}
Acting analogously, we can obtain a result for the related representation function
$$ \sum_{n\leq x} r_1^k(n) \asymp_k x(\log x)^{2^{k-1}-k-1} \quad \text{ for } k\geq 3, $$
where $r_1(n)=\#\{(m,p)\in \N\times \CP : n=m^2+p^2\}$ and the lower bound is conditional under a suitably adjusted version of Conjecture $\ref{conjectural_lower_bound}$. The proof follows the same lines, but now we are counting $k$ linear forms and a quadratic form in primes instead of $2k$ linear forms and a quadratic form. So, we get $(\log x)^{-k-1}$ when we are counting primes. We can also bound the singular series similar to way we bound in our case. Moreover, we can also get the mixed moments
$$ \sum_{n\leq x} r_1^k(n) r_2^{\ell}(n) \asymp_{k,\ell} x (\log x)^{2^{k+\ell-1}-k-2\ell - 1} \quad \text{ for } k\geq 3 \text{ and } \ell\geq 5, $$
where the lower bound is conditional under a suitably adjusted version of Conjecture $\ref{conjectural_lower_bound}$. This bound also holds for $k< 3$ or $\ell<5$ up to some double-logarithmic factors.
\end{remark}
\noindent Now we can state our conjecture. To do this, we define first an ``admissible" representation tuple in the appropriate sense.
\begin{definition}\label{admissibility}
Let $k\in\Z_{\geq 2}$, and $\{ m_1 , n_1, \cdots , m_k , n_k \} \subset \Z$ with $m_i^2+n_i^2=m_j^2+n_j^2$, $\forall i,j\leq k$ and $(m_i, n_i)\not=(m_j , n_j)$ for $i\not=j$. Then we call $(\vect{m},\vect{n})$ an \emph{admissible representation} if
$$ \forall p\in \CP , \exists a_p \in \F_p : (a_p^2+1)\prod_{i=1}^k (m_i a_p - n_i)(n_i a_p + m_i) \not\equiv 0 \mod p . $$
\end{definition}
\noindent The idea is to use the above definition to make
\begin{equation}\label{what_we_want}
r^2+s^2,m_i r - n_i s, n_i r + m_i s\in \CP 
\end{equation}
for $i\leq k$ and some $r,s\in \N$. Now we are ready to formulate our main conjecture.
\begin{conjecture}\label{conjectural_lower_bound}
Let $k\in\Z_{\geq 2}$, and $x\to\infty$. Let $\{ m_1 , n_1, \cdots , m_k , n_k \} \subset \Z$ with $(m_i, n_i)\not=(m_j , n_j)$ for $i\not=j$. If $(\vect{m},\vect{n})$ is an admissible representation, then
\begin{align}
\#\{ (r,s)\in\N^2 : r^2+s^2\leq x ,\,& 0 < m_i r - n_i s < n_i r + m_i s  , \eqref{what_we_want} \text{ holds, }\forall i\leq k \}  \label{sieving_set} \\
&\gg_k \mathfrak{S}(m_1, \cdots , n_k)\frac{x}{(\log x)^{2k+1}} \nonumber
\end{align}
where
$$ \mathfrak{S}(m_1, \cdots , n_k) = \prod_p \bigg( 1 - \frac{\nu_p(m_1,\cdots ,n_k)}{p^2} \bigg) \bigg( 1 - \frac{1}{p} \bigg)^{-2k-1} $$
and
$$ \nu_p(m_1,\cdots ,n_k) = \#\{ (r,s)\in \F_p^2: (r^2+s^2)\prod_{i=1}^k (m_i r - n_i s)( n_i r + m_is)\equiv 0 \mod p \}. $$
\end{conjecture}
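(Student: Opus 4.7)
Since this statement is flagged as a conjecture, the sketch below outlines a plausible attack and identifies the missing input that prevents it from being a theorem.

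The first step is to recast the count in $\eqref{sieving_set}$ as a joint prime-tuples problem in the two variables $(r,s)$. Beyond the $2k$ linear forms $L_i(r,s)=m_ir-n_is$ and $L_i'(r,s)=n_ir+m_is$, one must force the quadratic form $Q(r,s)=r^2+s^2$ to be prime. Identifying $(r,s)\leftrightarrow r+is\in\Z[i]$, the condition $Q(r,s)\in\CP$ amounts to $r+is$ being a Gaussian prime of split norm. Writing $\Lambda_{\Z[i]}$ for the von Mangoldt function on $\Z[i]$ supported on split primes, the count in $\eqref{sieving_set}$ is (up to the sector cut $0<L_i<L_i'$) comparable, after removing the $\log$-weights of each $\Lambda$-factor, with
\begin{equation*}
\CT(x) := \sum_{r^2+s^2\le x}\Lambda_{\Z[i]}(r+is)\prod_{i=1}^{k}\Lambda\bigl(L_i(r,s)\bigr)\Lambda\bigl(L_i'(r,s)\bigr),
\end{equation*}
the target being $\CT(x)\gg_k \mathfrak{S}(\vect{m},\vect{n})\,x$; dividing by the $2k+1$ logarithmic weights then produces the conjectured lower bound $\gg_k \mathfrak{S}(\vect{m},\vect{n})\,x/(\log x)^{2k+1}$.

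The second step is to run the Green--Tao--Ziegler transference argument in two variables. The linear system $\{L_i,L_i'\}_{i\le k}$ has finite Cauchy--Schwarz complexity bounded by a constant depending on $k$, so some Gowers norm $U^{s+1}$ controls the error in the main asymptotic. For the rational factors $\Lambda(L_i),\Lambda(L_i')$ one invokes the inverse $U^{s+1}$ theorem together with the Möbius--nilsequences theorem to replace each $\Lambda$ by its expected singular-series contribution. For the Gaussian factor one would require the analogous estimate
\begin{equation*}
\bigl\|\bigl(\Lambda_{\Z[i]}-\widetilde{\Lambda}_{\Z[i]}\bigr)\mathbf{1}_{r^2+s^2\le x}\bigr\|_{U^{s+1}(\Z^2)}=o(x),
\end{equation*}
where $\widetilde{\Lambda}_{\Z[i]}$ is a suitable model weight (for instance a Selberg-sieve majorant adapted to $\Z[i]$). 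Granting this, the local computation is routine: the density of admissible residues $(r,s)\bmod p$ is $1-\nu_p(\vect{m},\vect{n})/p^2$, dividing by the independent density $(1-1/p)^{2k+1}$ recovers each Euler factor of $\mathfrak{S}(\vect{m},\vect{n})$, and Definition~\ref{admissibility} guarantees every factor is strictly positive, so the main term is of the claimed order.

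The principal obstacle is precisely the required Gowers-uniformity estimate for $\Lambda_{\Z[i]}$ --- equivalently, a Möbius--nilsequences statement and an inverse Gowers norm theorem transplanted to $\Z[i]$. For the $U^2$ norm, Hecke $L$-function input for $\Z[i]$ should suffice and would plausibly yield the $k=2$ instance unconditionally (up to $\log$-factors), but for $k\ge 3$ the argument requires genuinely higher-step nilpotent equidistribution over $\Z[i]$, which is not currently in the literature. This gap is why the statement is only recorded as a conjecture, and why the lower bounds in Theorem~\ref{main_result} are stated conditionally.
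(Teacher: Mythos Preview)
The paper does not prove this statement at all: it is explicitly labeled a conjecture and is used only as a hypothesis under which the lower bounds in Theorem~\ref{main_result} are derived. The paper's sole commentary on its status is the remark that it is ``a quadratic extension of the Green--Tao theorem on linear equations in primes,'' and the observation following the conjecture that any lower bound on the set in $\eqref{sieving_set}$ would yield a corresponding lower bound on $\CS_k$.

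Your sketch is therefore not competing with a proof in the paper; rather, it supplies the heuristic the paper leaves implicit. Your identification of the problem as a two-variable prime-tuples count with $2k$ linear forms plus the quadratic $r^2+s^2$, and your framing via $\Lambda_{\Z[i]}$ and the Green--Tao--Ziegler transference machinery, is exactly the intended reading of ``quadratic extension.'' The local computation you describe (density $1-\nu_p/p^2$ against the independent density $(1-1/p)^{2k+1}$) is the one the paper carries out on the upper-bound side in Lemma~\ref{omega_p} and the surrounding discussion, so that part is on firm ground. Your diagnosis of the obstruction --- the missing higher-step Gowers uniformity / M\"obius--nilsequences input for $\Lambda_{\Z[i]}$ --- is the correct reason the statement remains conjectural. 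One small caution: your claim that the $k=2$ case might follow from Hecke $L$-function input alone is optimistic, since even for $k=2$ the system $\{L_1,L_1',L_2,L_2',Q\}$ already has complexity exceeding what $U^2$ controls; the paper makes no such unconditional claim for any $k\geq 2$.
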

\noindent It should be noted that the definition of $\nu_p(m_1,\cdots , n_k)$ is closely related to Definition $\ref{admissibility}$, and we can get from the former to the latter by looking at the projective solutions.
\begin{remark}
In our proof we use the conjectural lower bound of the correct order of magnitude, however, any lower bound on the set in $\eqref{sieving_set}$ yields a lower bound on $\CS_k$, and thus on the moments of $r_2$.
\end{remark}
We define the mass functions $\CN_r(x)=\#\{n\leq x : R_2(n)=r \}$ of $R_2$. Our detailed heuristic argument gives us the following,
\begin{conjecture} \label{CN_r}
We have
$$ \CN_2(x) \sim \rho \frac{x}{\log^3 x} $$
where $\rho \approx 0.0282\cdots$(defined in Section \ref{heuristic_section}), and for $3\leq r =o((\log x)^2)$,
\begin{equation}
\CN_r(x) \sim \phi_r \bigg( \frac{2\log\log x}{\log 2} \bigg) \frac{x}{(\log x)^{3+2\delta}\sqrt{\log\log x}},
\end{equation}
where, for $\beta=2^{1-t}$,
$$ \phi_r(t) := \frac{\tilde\kappa}{r!}\sum_{k\in \Z} (2^k \beta)^{r-2-\tau} e^{-2^k \beta} $$
is a 1-periodic, bounded function. $\tilde\kappa \approx 0.02761\cdots$(defined in Section \ref{heuristic_section}.), $\tau=\frac{\log(\frac{1}{\log 2})}{\log 2}=0.5287 \cdots$, and $\delta=1- \frac{1+\log\log 2}{\log 2}=0.086071\cdots$ is the Erd\H{o}s-Tenenbaum-Ford constant.
\end{conjecture}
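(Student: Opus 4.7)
The plan is to derive Conjecture~\ref{CN_r} via a Poisson heuristic coupled with Landau--Selberg--Delange asymptotics in the large-deviation regime of $\omega_1$. First I would classify $n\leq x$ by
$k(n):=\#\{(a,b)\in\Z_{>0}^2 : a<b,\; a^2+b^2=n\}$. For ``generic'' $n$ having $j$ distinct prime factors $\equiv 1\pmod 4$ to the first power (and a squarefull part supported on primes $\equiv 3\pmod 4$), one has $k(n)=2^{j-1}$. Modelling the $k(n)$ events ``$(a,b)\in\CP^2$'' as nearly independent with success probability $\sim 4/(\log x)^2$ leads to the heuristic $R_2(n)\sim\mathrm{Poisson}(\lambda_j)$ with $\lambda_j:=2^{j+1}/(\log x)^2$, so that
$$\CN_r(x)\;\approx\;\sum_{j\geq 1} N_j(x)\,\frac{\lambda_j^r e^{-\lambda_j}}{r!},\qquad N_j(x):=\#\{n\leq x:k(n)=2^{j-1}\}.$$

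Second, by Selberg--Delange/Sathe-type asymptotics, $N_j(x)\sim C\,\tfrac{x}{\log x}\,\tfrac{((\log\log x)/2)^{j-1}}{(j-1)!}$, with $C$ absorbing Landau's constant and the contribution of the $\equiv 3\pmod 4$ squarefull and $2$-power parts. For $r\geq 3$, I would locate the saddle point in $j$: setting $\tfrac{d}{dj}\log(N_j\lambda_j^r e^{-\lambda_j})=0$ with $v:=\log\log x$ gives $\log(v/(2j))=(\lambda_j-r)\log 2$, which combined with $\lambda_j=O(1)$ forces $j\sim(2/\log 2)v$ and $\lambda^\ast=r-2-\tau$, where $\tau=\log_2(1/\log 2)$---exactly the exponent appearing in $\phi_r$. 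Expanding around this saddle and using Stirling, the ratio $(N_{j_0+k}\lambda_{j_0+k}^r)/(N_{j_0}\lambda_{j_0}^r)$ simplifies to $(2^k)^{r-2-\tau}$ via $\log_2\log 2=-\tau$, and the remaining sum over $k\in\Z$ collapses to $\tfrac{\tilde\kappa}{r!}\sum_{k\in\Z}(2^k\beta)^{r-2-\tau}e^{-2^k\beta}$ with $\beta=2^{1-t}$, $t=2\log\log x/\log 2$; the $1$-periodicity in $t$ is the discretization artifact of the bijection $k\to k+1\Leftrightarrow t\to t+1$. The overall power $(\log x)^{-(3+2\delta)}(\log\log x)^{-1/2}$ emerges from Stirling applied to $(v/2)^{j_0}/j_0!$ at $j_0=(2/\log 2)v$, invoking the Erd\H{o}s--Tenenbaum--Ford identity $(1+\log\log 2)/\log 2=1-\delta$.

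The case $r=2$ is qualitatively different: since $r-2-\tau<0$ there is no interior saddle in $j$, so instead the sum is dominated by its full convergent tail $\sum_j(2v)^{j-1}/(j-1)!=e^{2v}=(\log x)^2$, producing the simpler $\rho x/(\log x)^3$ asymptotic with $\rho$ explicitly computable from the Selberg--Delange and Landau constants.

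The main obstacle is rigorously justifying the Poissonization: the events ``$(a_i,b_i)\in\CP^2$'' across different representations of a single $n$ are arithmetically correlated, and controlling them requires a joint distributional strengthening of the Green--Tao linear-equations-in-primes theorem, considerably stronger than the one-sided Conjecture~\ref{conjectural_lower_bound}. Secondary hurdles are: sharpening $N_j(x)$ with the correct Sathe constants in the large-deviation regime $j\sim(2/\log 2)v\gg v/2$; controlling non-generic $n$ (higher prime powers, $n=2p^2$, non-squarefree $\equiv 1\pmod 4$ part) to verify they contribute lower-order terms; and making the Poisson approximation uniform in $r$ when $\lambda_j\asymp 1$, which is also what delimits the stated range $r=o((\log x)^2)$. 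Evaluation of $\tilde\kappa$ and $\rho$ is then bookkeeping via the explicit Selberg--Delange prefactors.
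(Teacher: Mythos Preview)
Your proposal follows essentially the same heuristic route as the paper's Section~\ref{heuristic_section}: a binomial/Poisson model for $R_2(n)$ conditioned on $\omega^*(n)=m$, combined with a Sathe--Selberg estimate for $\#\{n\leq x:\omega^*(n)=m,\ n\in\CM\}$, then locating the dominant $m\sim(2/\log 2)\log\log x$ and applying Stirling to extract the Erd\H{o}s--Tenenbaum--Ford exponent and the $1$-periodic prefactor; the case $r=2$ is handled separately in both by summing the full tail. Your saddle-point formulation is a slightly more systematic phrasing of the paper's argument that $T_m$ increases for $2^{m+1}\leq(\log x)^2$ and then decays, and your identification of $\lambda^\ast=r-2-\tau$ agrees with the paper's exponent in $\sum_k(2^k\beta)^{r-2-\tau}e^{-2^k\beta}$; you also correctly flag that the $1$-periodicity is a discretization effect and that Poissonization is the genuine obstacle to rigour, which the paper leaves implicit.
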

For $r$ sufficiently large, we also expect to find infinitely many $x^+, x^-$ such that $\CN_r(x^+)>\CN_{r+1}(x^+)$ and $\CN_r(x^-)<\CN_{r+1}(x^-)$.

In Section $\ref{heuristic_section}$, we will give our heuristic reasoning for this conjecture and bounds on $\CS_k$. We will give our preliminary lemmas in Section $\ref{third_section}$. We will study the sum $\eqref{the_single_sum}$ and give a Selberg sieve bound with our singular series in Section $\ref{fourth_section}$. We will prove the upper bound of Theorem $\ref{main_result}$ successively in Sections $\ref{sixth_section}$, $\ref{seventh_section}$ and $\ref{eight_section}$ and the lower bound in Section $\ref{ninth_section}$.

\noindent \textbf{Acknowledgements.} The author is grateful to Andrew Granville for his continued guidance and suggestions. He would also like to thank Alisa Sedunova for her careful read through, and Tony Haddad, Jonah Klein, Sun-Kai Leung, Siva Sankar Nair, Stelios Sachpazis, Jeremy Schlitt, and Christian Táfula for their advices and discussions.

\section{Heuristics for $\CS_k$ and the Anatomy of $R_2$} \label{heuristic_section}
We know that $R_2(n)>0$ only for those $n$ who do not have $p^k\|n$ such that $p\equiv 3\mod 4$ and $k$ odd. Moreover, if $k$ is even, then it can only be $2$ and it happens when $n=2p^2$. Thus, for simplicity, we shall show our heuristics for squarefree $n$, but it applies more generally. Since $n$ is sum of two prime squares, it is almost surely divisible by $2$, hence we define $\omega^*(n)$ to be the number of odd prime factors of $n$. \\
\noindent Fix some $m\in\mathbb{N}$. Then we know
\begin{equation}\label{omega_m_squares} 
\{ n\in \mathbb{N} : \omega^*(n)=m \} \asymp_k \frac{x}{\log x} \frac{(\log\log x)^{m-1}}{(m-1)!},
\end{equation}
when $m\leq C_k \log\log x$. For a reference, see Koukoulopoulos \cite[Theorem 16.2]{MR3971232}. When $m>C_k \log\log x$, we use the Hardy-Ramanujan theorem
$$\{n\in \mathbb{N} : \omega^*(n)=m \} \ll \frac{x}{\log x} \frac{(\log\log x + D)^{m-1}}{(m-1)!}, $$
for some constant $D$. 

\noindent Now, we want $n=\square + \square$, thus its prime factors must be $2$ or $1\mod 4$. Now, an odd prime is either $1\mod 4$ or $3\mod 4$. Hence, we expect a prime factor to be $1\mod 4$ half the time. Since $\omega^*(n)=m$, we get the same factor for each prime divisor, so we have $ \frac{1}{2^m}$. Therefore, we expect
\begin{align*}
&\#\{ n\in\mathbb{N} : n\in \CM , \omega^*(n)=m \} \asymp_k \frac{x}{\log x} \frac{\big(\frac{1}{2} \log\log x \big)^{m-1}}{(m-1)!} &&\text{for } m\leq C_k \log\log x , 
\end{align*}
and
\begin{equation}\label{M_and_constant}
\CM:=\{n\in \N : 2\|n , p|\frac{n}{2}\implies p\equiv 1\mod 4\}.
\end{equation}
While for $m>C_k \log\log x$ we have
$$\#\{ n\in\mathbb{N} : n\in \CM , \omega^*(n)=m \} \ll \frac{x}{\log x} \frac{\big(\frac{1}{2} \log\log x + D \big)^{m-1}}{(m-1)!} .$$
For a reference in the literature on bounds like these, see Norton \cite{MR419382, MR523395}. Since $\omega^*(n)=m$, then we know such an $n$ has $2^{m-1}$ many representations as sums of two squares $a^2+b^2$ with $a<b$. From the prime number theorem, we know the density of primes up to $\sqrt{x}$ are $\frac{1}{\log \sqrt{x}}$, hence we expect a random representation $n=\square + \square$ to be both prime with probability $\sim \frac{1}{\log^2 \sqrt{x}}$. Now we want to choose $k$ distinct representations that are sums of two prime squares. Thus we expect for such $n$
$$  \binom{2^{m-1}}{k} \bigg( \frac{1}{\log^2 \sqrt{x}} \bigg)^k \asymp_k \frac{2^{(m-1)k}}{\log^{2k} x} $$ 
many $k$ distinct prime representations on average. So, we have to choose $C_k$ so that the numbers with $\omega^*(n)>C_k \log\log x$ are part of the tail event and are negligible. Consequently, we have for integers with $\omega^*(n)> C_k \log\log x$ and $C_k = 3 \cdot 2^{k-1}$,
\begin{align*}
\sum_{m> C_k \log\log x} \frac{x}{\log x} \frac{(\frac{1}{2}\log\log x + D)^{m-1}}{(m-1)!} \binom{2^{m-1}}{k} \frac{1}{\log^{2k} x} &\ll_k \frac{x}{(\log x)^{2k+1}}\sum_{m> C_k \log\log x} \frac{(2^{k-1}\log\log x )^{m-1}}{(m-1)!} \\
&\ll_k  \frac{x}{(\log x)^{2k+1}} \frac{(2^{k-1}\log\log x)^{C_k \log\log x}}{(C_k \log\log x)!} \\
&\leq \frac{x}{(\log x)^{C_k(\log 3 - 1) + 2k+1}}
\end{align*}
where the sum on the first line is dominated by the first term because 
$$ \frac{(2^{k-1}\log\log x)^m}{m!} \leq \frac{2^{k-1}}{C_k} \cdot \frac{(2^{k-1}\log\log x)^{m-1}}{(m-1)!} = \frac{1}{3} \cdot \frac{(2^{k-1}\log\log x)^{m-1}}{(m-1)!} , $$
since $m\geq C_k \log\log x$ and $C_k = 3\cdot 2^{k-1}>2^{k-1}$. Thus, by Stirling's approximation, we get the bound. \\
\noindent Therefore, putting the sums over $m\leq C_k \log\log x$ and $m>C_k \log\log x$, we get
\begin{align*}
\CS_k &\asymp_k \sum_{\frac{\log k}{\log 2} \leq m \leq C_k \log\log x} \frac{x}{\log x} \frac{(\frac{1}{2}\log\log x)^{m-1}}{(m-1)!} \binom{2^k}{m} \frac{1}{\log^{2k} x} + O_k \bigg( \frac{x}{(\log x)^{C_k(\log 3 - 1) + 2k+1}} \bigg) \\
&\asymp_k \frac{x}{(\log x)^{2k+1}} \sum_{\frac{\log k}{\log 2} \leq m \leq C_k \log\log x} \frac{(2^{k-1}\log\log x)^{m-1}}{(m-1)!} \\
&\asymp_k x (\log x)^{2^{k-1}-2k-1},
\end{align*}
where in the last line we completed the sum to $m\leq C_k \log\log x$ since the contribution of small $m$ satisfies
$$ \sum_{m < \frac{\log k}{\log 2}} \frac{(2^{k-1} \log\log x)^{m-1}}{(m-1)!} \ll_k (\log\log x)^{\frac{\log k}{\log 2}-1}.$$

We note that for most $n\in \N$ with $R_2(n)>0$, we have $R_2(n)=1$. To see this, let 
$$\CN_r(x)=\#\{n\leq x : R_2(n) = r\}, $$
then we have
$$ \sum_{n\leq x} R_2^k(n)= \sum_{r=1}^{\infty} r^k \CN_r(x).  $$
Then by Cauchy-Schwarz we have
$$ \bigg(\sum_{n\leq x}R_2(n)\bigg)^2\bigg/ \bigg( \sum_{n\leq x} R_2^2(n)\bigg) \leq \CN_1(x) \leq \sum_{n\leq x} R_2(n) ,$$
which gives
\begin{equation} \label{CN_1}
\CN_1(x)\sim \frac{\pi}{2} \frac{x}{\log^2 x}.
\end{equation}
But we have
\begin{equation}\label{CN_2}
\sum_{r=2}^{\infty} \CN_r(x) = \sum_{\substack{n\leq x \\ R_2(n)\geq 2}} 1 \leq \CS_2 \ll \frac{x}{\log^3 x} 
\end{equation}
by the result of Rieger \cite{MR229603}. We give a heuristic that shows $\CN_2(x)$ is of this size. However, the rest $\CN_r(x)$ for $r\geq 3$ are much smaller with some similarity to the multiplication table problem.

\noindent For an integer $n\leq x$ the typical representation $n=a^2+b^2$ have 1 in $\log^2 \sqrt{x}$ chance of having both $a$ and $b$ prime. Moreover, an integer $n$ with $\omega^*(n)=m$ has $2^{m-1}$ many representations $n=a^2+b^2$ with $a<b$ and $\gcd(a,b)=1$. Therefore, we expect for a typical integer $n\leq x$,
$$ \prob ( R_2(n) = r) = \binom{2^{m-1}}{r} \bigg( \frac{1}{\log^2 \sqrt{x}}\bigg)^{r} \bigg(1 - \frac{1}{\log^2 \sqrt{x}}\bigg)^{2^{m-1}-r} . $$
Now, for $r = o((\log x)^2)$, we have $(1- \frac{1}{\log^2 \sqrt{x}})^r = 1 + o(1)$. Hence we expect
$$ \prob(R_2(n)=r) \sim \frac{2^{(m+1)r}}{r!} \frac{1}{\log^{2r} x} \bigg(1-\frac{4}{\log^2 x}\bigg)^{2^{m-1}} \sim \frac{2^{(m+1)r}}{r!} \frac{1}{\log^{2r} x} \exp\bigg( - \frac{2^{m+1}}{\log^2 x}\bigg). $$
As in \eqref{omega_m_squares}, we have
\begin{equation}\label{inequality_to_asymptotic}
\#\{ n\leq x : n\in \CM , \omega^*(n)=m \} \asymp \frac{x}{\log x} \frac{(\frac{1}{2}\log\log x)^{m-1}}{(m-1)!}.
\end{equation}
So, we expect
\begin{align*}
\#\{ n\leq x : \omega^*&(n) = m ,\, R_2(n)=r \} \asymp \frac{x}{\log x} \frac{(\frac{1}{2}\log\log x)^{m-1}}{(m-1)!} \cdot \frac{2^{(m+1)r}}{r!} \frac{1}{\log^{2r} x} \exp\bigg( - \frac{2^{m+1}}{\log^2 x}\bigg) \\
&= \frac{2^{2r}}{r!} \frac{x}{\log^{2r+1} x} \cdot T_m \text{, where } T_m=T_m(x) :=  \frac{(2^{r-1}\log\log x)^{m-1}}{(m-1)!} \exp \bigg(- \frac{2^{m+1}}{\log^2 x}\bigg).
\end{align*}
Now, to get $\CN_r(x)$ we want to sum over $m$. We see that the terms $\frac{(2^{r-1}\log\log x)^{m-1}}{(m-1)!}$ are increasing by a ratio of $>2\log2>1$ for $2^{m+1}\leq (\log x)^2$ and $r\geq 3$. Also, for this range of $m$, $\exp(-2^{m+1}/\log^2 x)$ is $\asymp 1$. For $r=1$, we already know $\CN_1(x)\sim \pi x/(2\log^2 x)$ by $\eqref{CN_1}$. For $r=2$, we see that for $2^{m+1}\leq (\log x)^2$, there is no one $m$ for which $T_m$ is maximal, and for $2^{m+1}>(\log x)^2$, we see that $\exp(-2^{m+1}/\log^2 x)$ and hence $T_m$ decreases rapidly. Hence, we expect since $4>2/\log 2>2$,
\begin{align}
\CN_2(x) &\sim  8c_{\lambda} \frac{x}{\log^5 x} \sum_{m\leq \lambda \log\log x} \frac{(2\log\log x)^{m-1}}{(m-1)!} \exp\bigg(-\frac{2^{m+1}}{\log^2 x} \bigg) \label{appearance_of_c_lambda} \\
&= 8 c_{\lambda} \frac{x}{\log^5 x} \sum_{m\leq \frac{2}{\log 2}\log\log x} \frac{(2\log\log x)^{m-1}}{(m-1)!} + O\bigg( \frac{x}{\log^7 x} \sum_{m\leq \frac{2}{\log 2}\log\log x} \frac{(4\log\log x)^{m-1}}{(m-1)!} \bigg)\nonumber \\
&\sim \rho \frac{x}{\log^3 x}, \nonumber
\end{align}
where we used $\exp(-2^{k+1}/\log^2 x)=1+O(2^k/\log^2 x)$, Stirling's approximation, and we have
$$ c_{\lambda} := \frac{3}{2^{\lambda + 2} \Gamma(\lambda+1)} \prod_{p\equiv 1(\bmod 4)}\bigg( 1 + \frac{2\lambda}{p-1} \bigg) \bigg(1 - \frac{1}{p} \bigg)^{\lambda} \prod_{p\equiv 3(\bmod 4)} \bigg( 1 - \frac{1}{p} \bigg)^{\lambda}, $$
and $\lambda := 2/\log 2$, and $\rho:=8c_{\lambda}$. We have \eqref{appearance_of_c_lambda} because we can get an asymptotic for \eqref{inequality_to_asymptotic} with this Euler product by following the proof of Selberg \cite{MR0067143} for numbers in $\CM$ with $\sim \lambda \log\log x$ many prime factors. \\
\noindent Now, we study $r\geq 3$. If $2^{m+1} \leq (\log x)^2$, then we have $T_m$ is increasing and for $2^{m+1}>(\log x)^2$ it starts decreasing due to the $\exp(-2^{m+1}/\log^2 x)$. Therefore, it maximizes at $m=K+ k$ for $K=\lfloor \frac{2}{\log 2} \log\log x\rfloor$ and $k=O(1)$. If we let $K=\frac{2}{\log 2}\log\log x - t$, then we have
$$ \#\{ n\leq x : \omega^*(n)=K+k , R_2(n)=r \} \sim c_{\lambda} \frac{2^{2r}}{r!} \frac{x}{\log^{2r+1} x} \frac{(2^{r-1}\log\log x)^{K+k-1}}{(K+k-1)!} \exp ( - 2^{k - t+1} ) ,$$
where we get an asymptotic the same way as \eqref{appearance_of_c_lambda}. Now, we can study the double logarithmic factor. Recall $K=\lambda \log\log x - t$, then by Stirling's approximation,
\begin{align*}
\frac{(2^{r-1}\log\log x)^{K+k-1}}{(K+k-1)!} &\sim \frac{1}{\sqrt{2\pi \log\log x}} \bigg( \frac{2^{r-2} e \log 2 }{(1-\frac{t}{\lambda \log\log x})(1+ \frac{k-1}{\lambda \log\log x})} \bigg)^{K+k-1} \\
&= \frac{1}{\sqrt{2\pi \log\log x}} \bigg( \frac{2^{r-2} e \log 2 }{\exp\big(\frac{k-t-1}{\lambda \log\log x}+O\big(\frac{1}{(\log\log x)^2}\big)\big)} \bigg)^{K+k-1}   \\
&\sim \frac{2^{(k-t-1)(r-2-\tau)}}{\sqrt{2\lambda \pi}} \frac{(\log x)^{2r - 2 - 2\delta}}{\sqrt{\log\log x}},
\end{align*}
where
$$ \delta = 1 - \frac{1+\log\log 2}{\log 2} $$
is the Erd\H{o}s-Tenenbaum-Ford constant, and $\tau=\frac{\log(\frac{1}{\log 2})}{\log 2}$. Therefore, if we sum over $k$ and put everything together, we get Conjecture \ref{CN_r} for $3\leq r =o((\log x)^2)$,
$$ \CN_r(x) \sim \phi_r \bigg( \frac{2\log\log x }{\log 2}\bigg) \frac{x}{(\log x)^{3+2\delta}\sqrt{\log\log x}},  $$
where, for $\beta=2^{1-t}$,
$$ \phi_r(t) := \frac{\tilde\kappa}{r!}\sum_{k\in \Z} (2^k \beta)^{r-2-\tau} e^{-2^k \beta} $$
is a 1-periodic, bounded function, and $\tilde\kappa = c_{\lambda}\sqrt{(2\lambda)^3/\pi}\approx 0.02761\cdots$ (Calculated by taking the first $10^6$ primes in the product.), $\tau=\frac{\log(\frac{1}{\log 2})}{\log 2}$.

We also expect, for fixed large enough $r$, both $\CN_r(x)< \CN_{r+1}(x)$ and $\CN_r(x)>\CN_{r+1}(x)$ events to happen infinitely often. We can see this is true if we have $\phi_r(t)>\phi_{r+1}(t)$ or $\phi_r(t)<\phi_{r+1}(t)$ for $t\in[0,1]$. We focus on $\phi_r(t)>\phi_{r+1}(t)$. Thus, we want
\begin{equation}\label{N_r_to_N_r+1}
f_{r-2-\tau}(\beta) > \frac{1}{r+1} f_{r-1-\tau}(\beta) \quad \text{ where } f_{R}(\beta)=\sum_{k\in \Z} (2^k\beta)^R e^{-2^k\beta},
\end{equation}
for $\beta\in (1,2]$. Recall $f_R(\beta)$ is unchanged under the action $\beta \to 2\beta$, so this is defined for all $\beta \in (0,\infty)$. Note that the sum defining $f_R(\beta)$ is maximized when the ratio of consecutive terms is $>1$, that is $2^k \beta > R \log 2$. Hence, we can take $\beta=\lambda R$ for $\lambda \in (\log 2 , \log 4]$. Using this $\beta$, we can obtain
$$ f_R(R\log 2) \sim 2^{1-\delta R} \cdot R^R e^{-R} \leq f_R(\beta) \leq  R^R e^{-R} \sim f_R(R) , $$
as $R\to\infty$. Thence, if we use this and put $R=r-1-\tau$ in \eqref{N_r_to_N_r+1}, we get for $r$ sufficiently large
$$ \lambda(r-1-\tau) \leq r+1 .$$
So, we can, for instance, take $\lambda=1$ and $\beta= (r-1-\tau)$, then for $r$ sufficiently large we get
$$ f_{r-2-\tau}(r-1-\tau) > \frac{1}{r+1}f_{r-1-\tau}(r-1-\tau). $$
Therefore, for such a sufficiently large $r$, we can choose infinitely many $x$ values for which $\CN_r(x)>\CN_{r+1}(x)$. Similarly, one can choose $\lambda>1$ and find $x$ values for large enough $r$, for which $\CN_r(x)<\CN_{r+1}(x)$. (See the subsequent work \cite{granville2023multiplication} for a similar and more detailed account of these calculations.)

\section{Preliminary Results and Notation}\label{third_section}
Throughout the paper we use the usual asymptotic notations $O(\cdot ) , \ll, \gg$ and subscript is used to indicate the dependency of the constant. We fix some $k\in \Z_{\geq 2}$ and let $x$ be a sufficiently large number.

\noindent For convenience, when we denote sums with letters, we won't be showing the dependency on $x$ and potentially $k$. For instance, we may write $M = \sum_{n\leq x} r_0^k(n)$ instead of $M(x)$. We use $(\cdot , \cdot)$ for a point in $\mathbb{R}^2$, and $\gcd(\cdot , \cdot)$ for the greatest common divisor of two numbers.

\noindent We use $p,q$ with and without subscripts for primes, $m,n,N$ with and without subscripts for integers and $r,s, \ell , c, d , D , a, b$ with and without subscripts for natural numbers, and $\vect{m}, \vect{n}$ for vectors.

\noindent We will also use $\mathbb{P} (\mathbb{F}_p)$ for the projective space over $\mathbb{F}_p$, and $[k]:= \{ 1,2,\cdots ,k \}$.

\noindent Let $\tau(n)$ be the divisor function and $\tau_{\ell}(n):=\#\{n=d_1 \cdots d_{\ell}\}$.

Now, we give our preliminary lemmas.

\noindent Firstly, we will need the following logarithmic average bound for $r_0^k$ to apply for most of our sums,
\begin{lemma}
\label{r_2_bound}
Let $k\in \mathbb{Z}_{\geq 0}$ and $x\to\infty$. Then we have
$$ \sum_{n\leq x} \frac{r_0^k(n)}{n} \ll_k (\log x)^{2^{k-1}} . $$
\end{lemma}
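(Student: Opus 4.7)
The cleanest route is to bound the logarithmic sum via partial summation from the asymptotic \eqref{Perron_Moments} that the paper has already cited. Set $S(t) := \sum_{n \leq t} r_0^k(n)$. Equation \eqref{Perron_Moments} gives $S(t) \ll_k t (\log t)^{2^{k-1}-1}$ for $t \geq 2$. Abel summation yields
\[
\sum_{n \leq x} \frac{r_0^k(n)}{n} \;=\; \frac{S(x)}{x} + \int_{1}^{x} \frac{S(t)}{t^2}\, dt,
\]
and the integral is
\[
\ll_k 1 + \int_{2}^{x} \frac{(\log t)^{2^{k-1}-1}}{t}\, dt \;\ll_k\; (\log x)^{2^{k-1}},
\]
while the boundary term $S(x)/x \ll_k (\log x)^{2^{k-1}-1}$ is smaller. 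This delivers the claimed bound immediately.

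If one prefers a self-contained proof that does not invoke \eqref{Perron_Moments}, a standard alternative is to use that $r_0^k$ is a nonnegative multiplicative function, so by a Rankin-style argument
\[
\sum_{n \leq x} \frac{r_0^k(n)}{n} \;\leq\; \prod_{p \leq x}\bigg(1 + \sum_{j \geq 1} \frac{r_0^k(p^j)}{p^j}\bigg).
\]
Using $r_0(p) = 1 + \chi_4(p)$ (and the usual values on prime powers), the local factor equals $1 + O_k(p^{-2})$ for $p \equiv 3 \pmod 4$ and $p = 2$, while for $p \equiv 1 \pmod 4$ it is $1 + 2^k/p + O_k(1/p^2)$. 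Mertens' theorem for the arithmetic progression $1 \pmod 4$ gives $\sum_{p \leq x,\, p \equiv 1(4)} 1/p = \tfrac{1}{2}\log\log x + O(1)$, so the product is $\ll_k (\log x)^{2^{k-1}}$, matching the claim.

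Either approach is routine; there is no serious obstacle. The partial summation version is shortest, so that is the one I would present.
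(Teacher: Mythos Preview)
Your proposal is correct, and in fact you present both routes the paper considers: the paper explicitly remarks that one can get a stronger result directly from \eqref{Perron_Moments} (your partial summation argument), but for completeness gives the self-contained Euler product argument, which is exactly your second approach---bounding by $\prod_{p\leq x}(1+r_0^k(p)/p+O_k(1/p^2))$, isolating the contribution from $p\equiv 1\pmod 4$, and finishing with Mertens. So your second method is essentially identical to the paper's proof, while your preferred first method is the shortcut the paper acknowledges but does not spell out.
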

\begin{proof}
We can prove a stronger result by using $\eqref{Perron_Moments}$. But we provide a proof here for completeness. Since $r_0$ is multiplicative, we have
\begin{align*}
\sum_{n\leq x} \frac{r_0^k(n)}{n} &\leq \prod_{p\leq x} \bigg( 1 + \frac{r_0^k(p)}{p}+O_k\bigg(\frac{1}{p^2}\bigg) \bigg) \\
&\ll_k \prod_{\substack{p\leq x \\ p\equiv 1 \mod 4}} \bigg( 1 + \frac{2^k}{p}\bigg) \\
&\ll_k \prod_{\substack{p\leq x \\ p\equiv 1 \mod 4}} \bigg( 1 - \frac{1}{p}\bigg)^{-2^k} \ll_k (\log x)^{2^{k-1}},
\end{align*}
where we use on the last line Mertens' theorem
\begin{equation} \label{Merten_estimate}
\prod_{\substack{p\leq x \\ p\equiv 1 \mod 4}}\bigg( 1 -\frac{1}{p}\bigg)^{-1} \ll (\log x)^{1/2}. \qedhere
\end{equation}
\end{proof}
\noindent We will also need the following inequality,
\begin{lemma} \label{r_2_inequality}
Let $m , n \in \mathbb{N}$ with $m, n\in \CM $ (see \eqref{M_and_constant}), then
$$ r_0(mn) \leq r_0(m) r_0(n) . $$
\end{lemma}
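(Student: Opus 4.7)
The plan is to exploit the multiplicativity of $r_0$ together with the very restrictive structure of numbers in $\CM$, reducing the inequality to an elementary one about exponents of primes $\equiv 1 \mod 4$.

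First I would recall that $r_0 = 1 * \chi_4$ is a Dirichlet convolution, hence multiplicative. I would then compute $r_0$ at prime powers:
\begin{itemize}
\item $r_0(2^a) = 1$ for every $a \geq 0$, since $\chi_4$ vanishes on even arguments;
\item $r_0(p^a) = a+1$ for $p \equiv 1 \mod 4$;
\item $r_0(p^a) = \tfrac{1+(-1)^a}{2}$ for $p \equiv 3 \mod 4$ (not needed here, but for clarity).
\end{itemize}

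Next I would use the definition $\eqref{M_and_constant}$ of $\CM$: any $m \in \CM$ has the shape $m = 2 \prod_i p_i^{a_i}$ with $p_i \equiv 1 \mod 4$ and $a_i \geq 1$, and similarly $n = 2 \prod_i p_i^{b_i}$ (allowing $a_i$ or $b_i$ to be zero so that the products range over the same set of primes). By multiplicativity,
\[
r_0(m) = \prod_i (a_i+1), \qquad r_0(n) = \prod_i (b_i+1).
\]
Now $mn = 2^2 \prod_i p_i^{a_i+b_i}$, so again by multiplicativity
\[
r_0(mn) = r_0(4) \prod_i r_0(p_i^{a_i+b_i}) = \prod_i (a_i+b_i+1),
\]
using $r_0(4) = 1$. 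The desired inequality reduces to showing $a_i+b_i+1 \leq (a_i+1)(b_i+1)$ for each $i$, which is immediate from $(a_i+1)(b_i+1) - (a_i+b_i+1) = a_i b_i \geq 0$. Taking the product over $i$ gives $r_0(mn) \leq r_0(m) r_0(n)$.

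There is essentially no obstacle: the restriction to $\CM$ is precisely what is needed to avoid contributions from primes $\equiv 3 \mod 4$ (where $r_0$ can vanish, potentially reversing the inequality direction when combined with the $p = 2$ factor), and it also ensures that the factor $r_0(4) = 1$ at the prime $2$ in $mn$ does not exceed $r_0(2) r_0(2) = 1$. Thus the only content of the lemma is the trivial inequality $a + b + 1 \leq (a+1)(b+1)$.
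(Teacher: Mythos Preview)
Your proof is correct and follows essentially the same approach as the paper: both arguments reduce to the observation that on odd numbers supported only on primes $\equiv 1 \pmod 4$, the function $r_0$ coincides with the divisor function $\tau$, and then invoke the standard inequality $\tau(mn)\leq\tau(m)\tau(n)$. The paper does this in one line via $r_0(mn)=\sum_{d\mid mn}\chi_4(d)=\sum_{d\mid mn}1$, whereas you unpack the same fact at the level of prime powers; the content is identical.
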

\begin{proof}
Notice that $r_0(2n)=\sum_{\substack{d|2n \\ d \text{ odd}}} \chi_4(d)=r_0(n)$ for any $n\in \N$ since $\chi_4$ is supported on odd integers. Hence, we can assume without loss of generality $m,n$ are odd. So, for $m,n\in \CM$ odd, we have
$$ r_0(mn) = \sum_{d|mn} \chi_4(d) = \sum_{d|mn} 1 \leq \bigg( \sum_{d|m} 1 \bigg) \bigg( \sum_{d|n} 1 \bigg) = r_0(m) r_0(n) $$
where $\chi_4 $ is the non-principal character $\bmod 4$. 
\end{proof}
\noindent We later use this to split the $r_0$ over divisors.

\noindent Later on, we will need more refined results on averages of $r_0$. For this we will use a result of Henriot \cite[Theorem 5]{MR2911138}. There was an erratum in this article, the statement of Theorem 5 has been changed (see Henriot \cite{MR3254599}). However, it is the same statement in the case of one irreducible polynomial. We state it here.
\begin{theorem}\label{Henriot_1}
Let $Q \in \Z[X]$ be an irreducible polynomial of degree $g$. Assume that $Q$ is primitive, that is the greatest common divisor of its coefficients is 1. Let $0<\alpha <1$, $0<\delta < 1$, $A\geq 1$ and $B\geq 1$.
Let also $0<\varepsilon < \frac{\alpha}{50g(g+\frac{1}{\delta})}$ and $F$ be a function such that
$$F(a b ) \leq \min (A^{\Omega(a)}, B a^{\varepsilon})F(b)$$
for all $a , b $ with $\gcd (a, b)=1$. Define
\begin{align*}
\rho(M) &:= \#\{ n (\bmod M) : Q(n) \equiv 0 \mod M  \} .
\end{align*} 
Then we have, uniformly in $x\geq c_0 \|Q\|^{\delta}$, where $\|Q\|$ is the sum of the coefficients in absolute value of the polynomial, and $x^{\alpha} \leq y \leq x$,
$$ \sum_{x\leq n \leq x+y} F(|Q (n)|) \ll y \prod_{g<p\leq x} \bigg( 1 - \frac{\rho(p)}{p} \bigg) \sum_{n \leq x} F(n) \frac{\rho(n)}{n}  , $$
where $c_0$ and the implicit constant depend at most on $g,\alpha , \delta, A,B$.
\end{theorem}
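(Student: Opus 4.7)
The plan is to carry out a Shiu--Nair--Tenenbaum-type argument, of which this theorem is Henriot's polynomial-values refinement. The structural backbone is that $\rho$ is multiplicative with $\rho(p) \le g$ for every prime $p > g$ (Lagrange's theorem on polynomial congruences mod $p$), so the product $\prod_{g<p\le x}(1 - \rho(p)/p)$ behaves like the "density" of $n\le x$ with $Q(n)$ free of small prime factors, and the proof should decouple the contributions of the small and large prime divisors of $|Q(n)|$.

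First I would fix a parameter $z = x^{\eta}$ for some small $\eta = \eta(\alpha,\delta,\varepsilon)$, and for each $n \in [x, x+y]$ factor $|Q(n)| = a \cdot b$ uniquely, where $a$ is the $z$-smooth part and $b$ the $z$-rough part. The hypothesis $F(ab) \le \min(A^{\Omega(a)}, B a^{\varepsilon}) F(b)$ lets one pull the smooth part out and then swap the order of summation: for each admissible smooth value $a$, count the $n \in [x,x+y]$ with $a \mid Q(n)$ and $|Q(n)|/a$ coprime to all primes in $(z, x]$. The congruence $Q(n) \equiv 0 \pmod{a}$ has exactly $\rho(a)$ residue classes mod $a$, so the number of such $n$ is $y\rho(a)/a + O(\rho(a))$ by lattice-point counting, and a Selberg upper-bound sieve applied to $|Q(n)|/a$ supplies the factor $\prod_{z<p\le x}(1 - \rho(p)/p)$. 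Summing $F(a)\rho(a)/a$ over smooth $a$ and invoking Mertens-type estimates completes the Euler product over the full range $g < p \le x$ and reconstructs $\sum_{n\le x} F(n)\rho(n)/n$ on the right-hand side.

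The hard part is to make every estimate uniform in $x$, in $y \in [x^{\alpha}, x]$, and in $Q$ itself. Contributions from pairs with $a$ unusually large (say $a > y^{1-\eta'}$) must be absorbed into the stated bound, and here the technical condition $\varepsilon < \alpha/(50 g(g+1/\delta))$ is decisive: it lets the bound $Ba^{\varepsilon}$ be dominated by the savings from $y \ge x^{\alpha}$, while the alternative bound $A^{\Omega(a)}$ controls the entropic contribution from integers $a$ with many prime factors. The second delicate point is the hypothesis $x \ge c_0\|Q\|^{\delta}$, which is needed so that the finitely many "bad" primes (dividing the leading coefficient or discriminant of $Q$, where $\rho(p)$ can be as large as $g$) contribute only an $O(1)$ factor to both the Euler product and the sieve main term, rather than spoiling the level of distribution. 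Keeping the short-interval sieve strong enough in this regime is precisely Henriot's improvement over the Nair--Tenenbaum prototype, and is the main technical obstacle I would expect to face.
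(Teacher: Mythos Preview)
The paper does not prove this statement: Theorem~\ref{Henriot_1} is quoted verbatim from Henriot \cite[Theorem~5]{MR2911138} (with the erratum \cite{MR3254599}) and is used as a black box to derive Lemma~\ref{Henriot_use} and Corollary~\ref{Henriot}. There is therefore no proof in the paper to compare your proposal against.

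That said, your sketch is a reasonable high-level outline of the Shiu--Nair--Tenenbaum machinery that Henriot's paper refines: the decomposition of $|Q(n)|$ into a $z$-smooth part $a$ and a $z$-rough part $b$, the use of the two-sided hypothesis $F(ab)\le\min(A^{\Omega(a)},Ba^{\varepsilon})F(b)$ to control respectively the entropic and the polynomial tails, the counting of $n$ in residue classes via $\rho(a)$, and the upper-bound sieve for the rough cofactor are all correct ingredients. You are also right that the constraint on $\varepsilon$ and the condition $x\ge c_0\|Q\|^{\delta}$ are what make the estimates uniform in $Q$. If you were to actually carry this out you would need to be more careful about prime powers (Henriot needs $\rho(p^\nu)$, not just $\rho(p)$, and the treatment of primes dividing the discriminant is more delicate than an $O(1)$ absorption), but as a roadmap your proposal matches the structure of Henriot's actual argument.
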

\noindent Applied to our case, we have,
\begin{lemma} \label{Henriot_use}
Let $k\in \mathbb{Z}_{\geq 0}$ and $x,y$ be large numbers with $y\leq x$. Let $d\in \N$. We have
$$ \sum_{\substack{ a^2+b^2 \leq x \\ d|a \\ P^+(a^2+b^2)\leq y}} r_0^k ( a^2+b^2) \ll_k \frac{x}{\log x} \frac{(\log y)^{2^k}}{d^{\frac{1}{2}}}, $$
where $P^+(n)$ is the largest prime factor of $n$.
\end{lemma}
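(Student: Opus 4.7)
The plan is to apply Henriot's Theorem \ref{Henriot_1} to the univariate polynomial $Q_c(b) := b^2 + d^2 c^2$ for each fixed $c$, obtained by parametrizing $a = dc$, and then to sum over $c \leq \sqrt{x}/d$. Set $F(n) := r_0^k(n)\,\mathbf{1}_{P^+(n) \leq y}$, which is multiplicative. Using $r_0(m) \leq \tau(m) \leq 2^{\Omega(m)}$ together with $r_0(m) = O_{\varepsilon'}(m^{\varepsilon'})$, we get $F(mn) \leq \min(2^{k\Omega(m)}, C_\varepsilon m^{k\varepsilon'}) F(n)$ for coprime $m, n$, which verifies the hypothesis of Theorem \ref{Henriot_1} (with $A = 2^k$ and $B, \varepsilon$ suitably chosen). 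For each $c \geq 1$ with $d^2 c^2 \leq x - x^{2\delta}$, the polynomial $Q_c$ is irreducible over $\Q$ and primitive (its content is $\gcd(1, d^2 c^2) = 1$), so Theorem \ref{Henriot_1} yields
\[
\sum_{b : Q_c(b) \leq x} F(Q_c(b)) \ll_k \sqrt{x - d^2 c^2} \prod_{2 < p \leq x}\!\left(1 - \frac{\rho_c(p)}{p}\right) \sum_{n \leq x} F(n) \frac{\rho_c(n)}{n},
\]
where $\rho_c(m) := \#\{b \pmod m : b^2 + d^2 c^2 \equiv 0 \pmod m\}$.

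For odd primes, $\rho_c(p) = 2$ if $p \equiv 1 \pmod 4$ and $p \nmid dc$, $\rho_c(p) = 0$ if $p \equiv 3 \pmod 4$ and $p \nmid dc$, and $\rho_c(p) = 1$ if $p \mid dc$. By Mertens' theorem \eqref{Merten_estimate}, the first product is bounded by $(\log x)^{-1} H_1(dc)$, where $H_1$ is a multiplicative function collecting the correction at primes $p \mid dc$, with local factors $H_1(p) = 1 + O(1/p)$. Expanding the inner sum as an Euler product over primes $p \leq y$ (valid because of the smoothness indicator in $F$), the dominant contribution comes from $p \equiv 1 \pmod 4$ with $p \nmid dc$ and produces $\ll_k (\log y)^{2^k} H_2(dc)$ for a similarly bounded multiplicative function $H_2$, via another application of Mertens. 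The range $d^2 c^2 > x - x^{2\delta}$, where Henriot does not directly apply, is handled trivially using $r_0(n) = O_\varepsilon(n^\varepsilon)$: the number of admissible $(a,b)$ is $O(x^{\delta+1/2}/d)$, which is negligible.

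Combining these estimates and summing over $c$,
\[
\sum_{\substack{a^2+b^2 \leq x \\ d \mid a,\, P^+(a^2+b^2) \leq y}} r_0^k(a^2+b^2) \ll_k \frac{(\log y)^{2^k}}{\log x} \sum_{c \leq \sqrt{x}/d} \sqrt{x - d^2 c^2}\, H(dc),
\]
with $H := H_1 H_2$. The remaining sum over $c$ is estimated by a standard mean-value bound for multiplicative functions with bounded local factors $H(p) = 1 + O_k(1/p)$: using $\sqrt{x - d^2 c^2} \leq \sqrt{x}$ and factoring $H(dc)$ prime-by-prime, we obtain a bound of order $\sqrt{x} \cdot (\sqrt{x}/d) \cdot H(d)$ up to polylogarithmic losses, which comfortably yields the stated bound $\ll x/d^{1/2}$ (with ample room). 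The main technical obstacle is the bookkeeping of the local factors at primes $p \mid d$ in both Euler products: one must verify that the ratios $(1 - 1/p)/(1 - \rho^*(p)/p)$ and the modified Euler factors $\sum_{j} r_0(p^j)^k \rho_c(p^j)/p^j$ at such primes remain bounded and do not accumulate uncontrolled growth in $d$. Since $H(p) = 1 + O_k(1/p)$ at every prime, this growth is at worst a polynomial in $\log d$ and is easily absorbed into the $d^{-1/2}$ savings.
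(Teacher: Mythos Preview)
Your approach is essentially identical to the paper's: fix $a$ (you parametrize $a=dc$), apply Henriot's theorem to the inner sum over $b$ with $F(n)=r_0^k(n)\mathbf{1}_{P^+(n)\le y}$, and then sum the resulting multiplicative correction $H(dc)=\prod_{p\mid dc}(1+O_k(1/p))$ over the remaining variable; the paper extracts the $d^{-1/2}$ saving via the crude inequality $\mathrm{lcm}(\ell,d)\ge\sqrt{\ell d}$, whereas your parametrization in fact gives $\sum_{c\le\sqrt{x}/d}H(dc)\ll_k(\sqrt{x}/d)H(d)$ with \emph{no} polylogarithmic loss, so your caution there is unnecessary. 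One small correction on the edge case: the hypothesis in Theorem~\ref{Henriot_1} is that the \emph{lower endpoint} of the $b$-interval satisfy $\ge c_0\|Q_c\|^{\delta}\asymp (dc)^{2\delta}$, not that the interval be long, so the range to treat trivially is $b\ll(dc)^{2\delta}$ for each fixed $c$ (as the paper does with $b\le c_0\sqrt{a}$), rather than $d^2c^2>x-x^{2\delta}$.
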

\begin{proof}
Firstly, we note that
$$ S(x) = \sum_{\substack{ a^2+b^2 \leq x \\ d|a \\ P^+(a^2+b^2)\leq y}} r_0^k ( a^2+b^2) \leq \sum_{\substack{a\leq \sqrt{x} \\ d| a}} \sum_{\substack{b\leq \sqrt{x} \\ P^+(a^2+b^2)\leq y}} r_0^k(a^2+b^2). $$
We are going to apply Theorem $\ref{Henriot_1}$, so let us fix $a$ and consider $Q(m)=m^2+a^2$ an irreducible primitive polynomial, then $b\geq c \|Q\|^{\delta}$ and we have $c \|Q\| \leq c_0 a^{2\delta}$, so we just need $b\geq c_0 a^{2\delta}$ for $\delta = \frac{1}{4}$. Take $F(n)= r_0^k(n)1_{P^+(n)\leq y}$, then this is multiplicative. Let $\gcd(u,v)=1$, then $r_0^k(u v) = r_0^k(u) r_0^k(v) \ll_{k, \varepsilon} u^{\varepsilon} r_0^k(v)$ and also $r_0(u) \leq 2^{\Omega(u)}$, so it satisfies the bounds for $F$ in Theorem $\ref{Henriot_1}$ for some $B\geq 1$ and $A=2^k$. Thus by applying it we get
\begin{align*}
\sum_{\substack{c_0 \sqrt{a} \leq b \leq \sqrt{x} \\ P^+(a^2+b^2)\leq y}}& r_0^k ( a^2+b^2) \ll_k \sqrt{x} \prod_{2<p\leq \sqrt{x}} \bigg( 1 - \frac{\rho (p)}{p} \bigg) \sum_{\substack{m\leq \sqrt{x} \\ P^+(m)\leq y}} r_0^k(m) \frac{\rho(m)}{m}.
\end{align*}
Recall the definition $\rho(n)= \#\{m\in \mathbb{Z}/n\mathbb{Z} : m^2+a^2 \equiv 0 \mod n\}$, hence we have $\rho(p) = 1+ \big( \frac{-4 a^2}{p} \big)$ for $p$ prime, where $\big(\frac{\cdot}{p}\big)$ is the Legendre symbol. Moreover it is multiplicative and $\rho(p^b)\leq 2$ for $b\geq 1$. Also for $p|a$ we have $\rho(p)=1$, and $p\nmid a$ we have $\rho(p)=1+\chi_4(p)$. Therefore, on inserting it to the product above and getting rid of higher order terms we get
\begin{align*}
\sum_{\substack{c_0 \sqrt{a} \leq b \leq \sqrt{x} \\ P^+(a^2+b^2)\leq y}}& r_0^k ( a^2+b^2)  \ll_k \sqrt{x} \prod_{p\leq \sqrt{x}} \bigg( 1 - \frac{1+\big(\frac{-4a^2}{p} \big)}{p} \bigg)\prod_{p\leq y} \bigg( 1 + \frac{r_0^k(p) ( 1 + \big( \frac{-4a^2}{p} \big))}{p} \bigg) \\
&\leq \sqrt{x} \prod_{\substack{p\leq \sqrt{x} \\ p\nmid a}} \bigg( 1 - \frac{1+\chi_4(p)}{p} \bigg)\prod_{\substack{p\leq y \\ p\nmid a}} \bigg( 1 + \frac{(1+\chi_4(p))^{k+1}}{p}  \bigg) \prod_{p|a} \bigg(1 +\frac{(1+\chi_4(p))^k}{p} \bigg) \\
&= \sqrt{x} \prod_{\substack{p \leq \sqrt{x} \\ p\equiv 1 \mod 4}} \bigg(1 - \frac{2}{p}\bigg) \prod_{\substack{p\leq y \\ p\equiv 1\mod 4}} \bigg( 1 + \frac{2^{k+1}}{p}  \bigg) \prod_{p|a} \bigg(1 - \frac{1+\chi_4(p)}{p} \bigg)^{-1} \bigg( 1 + \frac{2^k}{p}\bigg) \\
&\ll_k \sqrt{x} \prod_{\substack{p\leq \sqrt{x} \\ p\equiv 1 \mod 4}}\bigg( 1- \frac{1}{p}\bigg)^2  \prod_{\substack{p\leq y \\ p\equiv 1\mod 4}} \bigg( 1 - \frac{1}{p} \bigg)^{-2^{k+1}} \prod_{p|a} \bigg( 1 + \frac{1+\chi_4(p)}{p} \bigg) \bigg( 1 + \frac{2^k}{p}\bigg) \\
&\ll_k  \frac{\sqrt{x}}{\log x} (\log y)^{2^k}\prod_{p|a} \bigg( 1 + \frac{2^{k+1}}{p} \bigg)
\end{align*}
where we completed the product over $p\leq \sqrt{x}$ by adding the missing terms and $p\leq y$ since the terms are $\geq 1$ to get the third line. We also bounded $(1+\chi_4(p))\leq 2$ for the product over $p|a$. We also used $\eqref{Merten_estimate}$ to calculate the products over $p\leq y$ and $p\leq \sqrt{x}$. \\
\noindent Now, we sum this over $a$ to get
$$\ll_k \sum_{\substack{a\leq \sqrt{x} \\ d|a}} \frac{\sqrt{x}}{\log x} (\log y)^{2^{k}} \prod_{p|a}\bigg( 1 + \frac{2^{k+1}}{p}\bigg) .$$
We then have
\begin{align*}
\sum_{\substack{a\leq \sqrt{x} \\ d|a}}\prod_{p|a}\bigg( 1 + \frac{2^{k+1}}{p}\bigg) &= \sum_{\substack{a\leq \sqrt{x} \\ d|a}} \sum_{\ell|a} \frac{\mu^2(\ell) (2^{k+1})^{\omega (\ell)}}{\ell} \\
&= \sum_{\ell} \frac{\mu^2(\ell) (2^{k+1})^{\omega(\ell)}}{\ell} \sum_{\substack{a\leq \sqrt{x} \\ \lcm(\ell , d)|a}} 1 \\
&\leq \sum_{\ell} \frac{\mu^2(\ell)(2^{k+1})^{\omega(\ell)}}{\ell} \frac{\sqrt{x}}{\lcm(\ell , d)} \\
&\leq \frac{\sqrt{x}}{d^{\frac{1}{2}}} \sum_{\ell} \frac{\mu^2(\ell)(2^{k+1})^{\omega(\ell)}}{\ell^{\frac{3}{2}}} \\
&= \frac{\sqrt{x}}{d^{\frac{1}{2}}}  \prod_{p} \bigg(1+ \frac{2^{k+1}}{p^{\frac{3}{2}}} \bigg) \ll_k  \frac{\sqrt{x}}{d^{\frac{1}{2}}},
\end{align*}
where $\lcm(\cdot, \cdot)$ is the least common multiple, and we used $\lcm(\ell , d )\geq \sqrt{\ell d}$. Hence
$$ \sum_{\substack{a\leq \sqrt{x} \\ d| a}} \sum_{\substack{c_0 \sqrt{a}\leq b\leq \sqrt{x} \\ P^+(a^2+b^2)\leq y}} r_0^k(a^2+b^2) \ll_k \frac{x}{\log x} \frac{(\log y)^{2^k}}{d^{\frac{1}{2}}}. $$
\noindent For the remaining sum by omitting the condition $P^+(a^2+b^2)\leq y$ and using $r_0(n)\ll_{\varepsilon} n^{\varepsilon}$, we get
\begin{align*}
\sum_{\substack{a\leq \sqrt{x} \\ d|a}} \sum_{\substack{b\leq c_0 \sqrt{a} \\ P^+(a^2+b^2)\leq y}} r_0^k(a^2+b^2) &\ll_k \sum_{\substack{a\leq \sqrt{x} \\ d|a}} \sum_{b\leq c_0 \sqrt{a} } (a^2+b^2)^{\frac{1}{100}} \\
&\leq \sum_{\substack{a\leq \sqrt{x} \\ d|a}} a^{\frac{1}{50}}\sum_{b\leq c_0 \sqrt{a}} b^{\frac{1}{50}} \\
&\ll \sum_{\substack{a\leq \sqrt{x} \\ d|a}} a^{\frac{53}{100}} \\
&= d^{\frac{53}{100}} \sum_{m\leq \frac{\sqrt{x}}{d}} m^{\frac{53}{100}} \\
&\ll d^{\frac{53}{100}}\bigg(\frac{\sqrt{x}}{d}\bigg)^{\frac{153}{100}} = \frac{x^{\frac{153}{200}}}{d},
\end{align*}
Thus, the contribution from this sum over $a\leq \sqrt{x}$ and $b\leq c_0 \sqrt{a}$ is small.
\end{proof}
\noindent Notice that if $d=1$, then we have the following,
\begin{equation}\label{specific_use}
\sum_{\substack{a^2+b^2\leq x \\ P^+(a^2+b^2)\leq y}} r_0^k(a^2+b^2) = \sum_{\substack{N\leq x \\ P^+(N)\leq y}} r_0^{k+1}(N) \ll_k \frac{x}{\log x}(\log y)^{2^k}
\end{equation}
Now, we use Lemma $\ref{Henriot_use}$ to prove the following corollary.
\begin{corollary}
\label{Henriot}
Let $k\in \mathbb{Z}_{\geq 0}$ and $x$ be a large number. Let $d\in\mathbb{N}$, then
$$ \sum_{\substack{a^2+b^2\leq x \\ d |a}} \frac{r_0^k(a^2+b^2)}{a^2+b^2} \ll_k \frac{(\log x)^{2^k}}{d^{\frac{1}{2}}} . $$
\end{corollary}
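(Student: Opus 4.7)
The strategy is to reduce the weighted sum to the unweighted estimate provided by Lemma \ref{Henriot_use} via a dyadic decomposition, picking up one additional factor of $\log x$ from the sum over dyadic scales. The key observation is that for $X$ large the restriction $P^+(a^2+b^2) \leq y$ in Lemma \ref{Henriot_use} becomes vacuous on taking $y = X$, since $P^+(a^2+b^2) \leq a^2+b^2 \leq X$. Setting
$$T(X) := \sum_{\substack{a^2+b^2 \leq X \\ d \mid a}} r_0^k(a^2+b^2),$$
the lemma then yields $T(X) \ll_k X(\log X)^{2^k - 1}/d^{1/2}$ for $X$ sufficiently large.

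Next I would split the range $1 \leq a^2+b^2 \leq x$ into dyadic blocks $2^j \leq a^2+b^2 < 2^{j+1}$ with $0 \leq j \leq J := \lceil \log_2 x \rceil$. Since $1/(a^2+b^2) \leq 2^{-j}$ on each such block,
$$\sum_{\substack{a^2+b^2 \leq x \\ d \mid a}} \frac{r_0^k(a^2+b^2)}{a^2+b^2} \;\leq\; \sum_{j=0}^{J} 2^{-j}\, T(2^{j+1}) \;\ll_k\; \frac{1}{d^{1/2}}\sum_{j=0}^{J}\big((j+1)\log 2\big)^{2^k-1} \;\ll_k\; \frac{(\log x)^{2^k}}{d^{1/2}},$$
which is the claimed bound. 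The finitely many small dyadic scales at which Lemma \ref{Henriot_use} requires $X$ to be large contribute only $O_k(1)$ and can be absorbed into the implicit constant (alternatively, one can formulate this step as Abel summation against the distribution function $T$, which produces exactly the same bound). The only substantive step is invoking Lemma \ref{Henriot_use}; the rest is a routine dyadic/partial summation argument yielding the expected single-logarithm loss, so no serious obstacle is expected.
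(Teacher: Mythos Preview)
Your proposal is correct and follows essentially the same approach as the paper: both invoke Lemma~\ref{Henriot_use} with $y=X$ to obtain $T(X)\ll_k X(\log X)^{2^k-1}/d^{1/2}$, and then pass from the unweighted sum to the logarithmic average. The paper does this via partial summation (which you yourself mention as an alternative), while you use a dyadic decomposition; the two are interchangeable here and yield the same extra factor of $\log x$.
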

\begin{proof}
On taking $y=x=t$ in Lemma $\ref{Henriot_use}$, we get
$$ S(t) = \sum_{\substack{a^2+b^2 \leq t \\ d|a}} r_0^k(a^2+b^2) \ll_k t \frac{(\log t)^{2^k-1}}{d^{\frac{1}{2}}}.$$
Then by partial summation we have
\begin{align*}
\sum_{\substack{a^2+b^2\leq x \\ d |a}} \frac{r_0^k(a^2+b^2)}{a^2+b^2} &= \frac{S(t)}{t} \bigg|_{t=1}^{x} + \int_1^{x} \frac{S(t)}{t^2} dt \\
&\ll \frac{1}{d^{\frac{1}{2}}}\bigg(\int_1^{x} \frac{(\log t)^{2^k-1}}{t} dt + (\log x)^{2^k-1} \bigg) \\
&\ll \frac{(\log x)^{2^k}}{d^{\frac{1}{2}}}. \qedhere
\end{align*}
\end{proof}
For the lower bound in Theorem $\ref{main_result}$, we will need the following two lemmas,
\begin{lemma}\label{lower_bound_r_2}
Let $k\in \Z_{\geq 1}$ and $x$ be a large number. If we have $z\ll x^{\delta}$ for some $0< \delta < 1$. Then
$$ \sum_{\substack{z\leq N\leq x \\ 2|N }} \mu^2(N) \frac{r_0^k(N)}{N} \gg_{k,\delta } (\log x)^{2^{k-1}} . $$
\end{lemma}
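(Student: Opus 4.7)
The plan is to reduce the sum to a multiplicative sum over integers with prime factors in a fixed arithmetic progression, extract the leading asymptotic via Selberg--Delange, and then exploit $\delta<1$ to ensure the contribution from $N\le z$ is strictly smaller than the contribution from $N\le x$.

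First I would simplify the summand. Since $r_0(n)=\sum_{d\mid n}\chi_4(d)$ is supported on odd divisors, $r_0(2m)=r_0(m)$ for every $m$. Writing $N=2M$ with $M$ odd squarefree, $r_0(M)=\prod_{p\mid M}(1+\chi_4(p))$ vanishes unless every prime divisor of $M$ is $\equiv 1\pmod 4$, in which case $r_0(M)=2^{\omega(M)}$. Denoting by $\CM^{*}$ the set of odd squarefree integers with all prime factors $\equiv 1\pmod 4$, the sum in the lemma equals
$$\frac{1}{2}\sum_{\substack{z/2\,\le\, M\,\le\, x/2 \\ M\,\in\,\CM^{*}}}\frac{2^{k\omega(M)}}{M}.$$

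Next I would establish the asymptotic
$$A(y)\;:=\;\sum_{\substack{M\le y \\ M\in\CM^{*}}}\frac{2^{k\omega(M)}}{M}\;=\;C_k(\log y)^{2^{k-1}}+O_k\!\left((\log y)^{2^{k-1}-1}\right),\quad C_k>0.$$
The Dirichlet series is $F(s)=\prod_{p\equiv 1\,(\bmod\,4)}(1+2^k/p^s)$; using the factorisation
$$\zeta(s)L(s,\chi_4)(1-2^{-s})=\prod_{p\equiv 1(\bmod\,4)}(1-p^{-s})^{-2}\prod_{p\equiv 3(\bmod\,4)}(1-p^{-2s})^{-1},$$
a routine manipulation yields $F(s)=\zeta(s)^{2^{k-1}}L(s,\chi_4)^{2^{k-1}}H(s)$ with $H(s)$ holomorphic and nonzero in a half-plane $\Re s>1-\eta$. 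Since $L(1,\chi_4)=\pi/4>0$, the Selberg--Delange theorem (see e.g.\ Tenenbaum, \emph{Introduction to Analytic and Probabilistic Number Theory}, Ch.~II.5) produces the claimed asymptotic for $A(y)$ with $C_k>0$ an explicit positive constant.

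Finally, since $z\ll x^{\delta}$ with $\delta<1$, one has $\log z\le\delta\log x+O_\delta(1)$, so
$$A(x/2)-A(z/2)\;\ge\;C_k\big(1-\delta^{2^{k-1}}\big)(\log x)^{2^{k-1}}+O_k\!\left((\log x)^{2^{k-1}-1}\right),$$
which is $\gg_{k,\delta}(\log x)^{2^{k-1}}$ for $x$ sufficiently large, as $\delta^{2^{k-1}}<1$. The only real obstacle is the bookkeeping in the Selberg--Delange factorisation of $F(s)$; alternatively, one could invoke Wirsing's mean-value theorem applied to the multiplicative function $2^{k\omega(M)}\mathbf{1}_{M\in\CM^{*}}$, whose prime values concentrate on an arithmetic progression of density $1/2$, immediately yielding the $(\log y)^{2^{k-1}}$ growth.
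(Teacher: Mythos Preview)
Your proof is correct and follows essentially the same approach as the paper: reduce to odd squarefree integers (where the summand is supported on those with all prime factors $\equiv 1\pmod 4$), obtain the asymptotic $C_k(\log y)^{2^{k-1}}+O_k((\log y)^{2^{k-1}-1})$ for the partial sums, and then subtract using $\log z\le \delta\log x+O(1)$. The only cosmetic difference is that the paper invokes Wirsing's theorem directly on $f(m)=\mu^2(m)r_0^k(m)1_{2\nmid m}$, whereas you route through Selberg--Delange (and even note Wirsing as an alternative); the two yield the same conclusion with the same error term.
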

\begin{proof}
First we notice that we can get rid of the $2|N$. To see this, let $2m=N$, then
$$ \sum_{\substack{z\leq N\leq x \\ 2|N }} \mu^2(N) \frac{r_0^k(N)}{N} = \frac{1}{2} \sum_{\substack{z \leq 2m \leq x \\ 2\nmid m }} \mu^2(m) \frac{r_0^k(m)}{m} . $$
We use Wirsing's theorem to handle this sum. For a reference, see Koukoulopoulos\cite[Theorem 14.3]{MR3971232}. We will use it with $f(m)= \mu^2(m) r_0^k(m) 1_{2 \nmid m}$. We first note $f(m) \leq \tau^k(m) \leq \tau_{2^k}(m)$ where $\tau_{2^k} = \#\{ n=d_1\cdots d_{2^k} \} $. Moreover,
\begin{align*}
\sum_{p\leq x} \frac{f(p) \log p}{p} &= \sum_{2<p\leq x} \frac{r_0(p)^k \log p}{p} \\
&= 2^k \sum_{\substack{2<p\leq x \\ p\equiv 1 \mod 4}} \frac{\log p}{p} = 2^{k-1} \log x + O_k(1) .
\end{align*}
Thus, Wirsing's theorem implies
$$ \sum_{m\leq x} \frac{f(m)}{m} = \sum_{\substack{m\leq x \\ 2\nmid m}} \frac{\mu^2(m) r_0^k(m)}{m} = \frac{\mathfrak{S}(f)}{\Gamma (2^{k-1}+1)}(\log x)^{2^{k-1}} + O_k( (\log x)^{2^{k-1}-1}) $$
where
$$ \mathfrak{S}(f) = \prod_{p>2} \bigg( 1 - \frac{1}{p} \bigg)^{2^{k-1}} \bigg( 1 +\frac{(1+\chi_4(p))^k}{p} \bigg). $$
Then as $z\leq C x^{\delta}$ for $0<\delta < 1$ we get
\begin{align*}
\sum_{z\leq 2m \leq x} \frac{f(m)}{m} &= \sum_{2m \leq x} \frac{f(m)}{m}  - \sum_{2m \leq z} \frac{f(m)}{m} \\
&= \frac{\mathfrak{S}(f)}{\Gamma (2^{k-1}+1)} \big( (\log x)^{2^{k-1}} - (\log z)^{2^{k-1}} \big) + O_k ((\log x)^{2^{k-1}-1} ) \\
&\geq (\log x)^{2^{k-1}} \bigg(1 -  \bigg(\delta + \frac{\log C}{\log x}\bigg)^{2^{k-1}} \bigg) \gg_{k, \delta} (\log x)^{2^{k-1}}. \qedhere
\end{align*}
\end{proof}
We have sums with admissibility condition. Hence, we need the following lemma to handle such sums.
\begin{lemma}\label{admissible_lower_bound}
Let $k\in \Z_{\geq 2}$, and $\{m_1, n_1,\cdots , m_k , n_k\}\subset \Z$ with $(m_i , n_i)\not=(m_j,n_j)$, for all $i\not=j$, and $\gcd(m_i,n_i)=1$, $\forall i\leq k$. If $m_i^2+n_i^2=m_j^2+n_j^2$ for $i\not=j$ and $2\|(m_i^2+n_i^2)$ for $i\leq k$, then there exists a set $I\subset \{1,2, \cdots , k\}$ such that
\begin{align}
&\#I \gg \frac{k}{\log^2 k} , \nonumber \\
&\forall p\in\CP , \exists a_p\in \F_p : (a_p^2+1)\prod_{i\in I} (m_i a_p + n_i)(n_i a_p - m_i)\not\equiv 0\mod p, \label{admissibility_condition}
\end{align}
which means the vector $(\vect{m},\vect{n})$ restricted to $I$ is an admissible representation.
\end{lemma}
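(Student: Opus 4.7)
The plan is to construct $I$ by a sequential pruning argument. After handling primes where admissibility is automatic (or forced by the structure of $N := m_i^2 + n_i^2$), I would iterate through the remaining small primes using a pigeonhole/averaging step, losing only a controlled fraction of indices at each stage.

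First I would partition the primes into four regimes. For $p=2$: the hypothesis $2\|N$ forces $m_i, n_i$ both odd, so $a_2 = 0$ gives $(a_2^2+1)(n_i)(-m_i) = -m_i n_i$ odd, imposing no constraint on $I$. For odd $p \mid N$: coprimality of $(m_i, n_i)$ combined with $p \mid m_i^2 + n_i^2$ forces $p \nmid m_i n_i$, and the identity $(-n_i/m_i)^2 + 1 = N/m_i^2 \equiv 0 \mod p$ (and similarly for $m_i/n_i$) shows that each root of the linear forms is also a root of $a^2+1$. Since $-1$ is a QR modulo $p$, the set of roots of $a^2+1$ in $\F_p$ has exactly two elements $\pm c_0$, so any $a_p \in \F_p \setminus \{\pm c_0\}$ works for every $i$ (and $p - 2 \geq 1$). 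For $p \nmid 2N$ with $p > 2k+2$: the total bad set has size at most $2k+2 < p$, so some $a_p$ works for any $I \subseteq [k]$. None of these regimes costs us any indices.

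For the remaining primes $p_1 < \cdots < p_t$ in $[3, 2k+2]$ with $p_j \nmid 2N$, I would prune recursively. Start from $I_0 := [k]$, and given $I_{j-1}$, set $U_{p_j} := \F_{p_j} \setminus \{a : a^2+1 \equiv 0 \mod{p_j}\}$, so $|U_{p_j}| \geq p_j - 2$. Each $i \in I_{j-1}$ contributes at most two bad residues in $U_{p_j}$, hence
$$\sum_{a \in U_{p_j}} \#\{ i \in I_{j-1} : (m_i a + n_i)(n_i a - m_i) \equiv 0 \mod{p_j} \} \leq 2|I_{j-1}|.$$
Picking $a_{p_j}$ to minimize the inner count and setting $I_j := I_{j-1} \setminus \{\text{bad indices}\}$ gives $|I_j| \geq |I_{j-1}|(1 - 2/|U_{p_j}|)$.

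Taking $I := I_t$, the final bound reads $|I| \geq k \prod_{j=1}^t (1 - 2/|U_{p_j}|)$. The main obstacle is the near-degenerate case $p_j \in \{3, 5\}$, where $|U_{p_j}| = 3$ and the naive bound $1 - 2/(p-2)$ would be negative or meaningless; using $|U_p| = p$ when $-1$ is not a QR mod $p$ (so $|U_3| = 3$) resolves this and contributes a bounded factor of at least $1/9$. For the remaining primes $p \geq 7$, the inequality $\log(1-x) \geq -x/(1-x)$ applied to $x = 2/|U_p|$ together with Mertens' theorem $\sum_{p \leq P} 1/p = \log\log P + O(1)$ would yield $\prod_{7 \leq p \leq 2k+2}(1 - 2/|U_p|) \gg 1/\log^2 k$, whence $|I| \gg k/\log^2 k$ as required.
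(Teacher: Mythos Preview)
Your proposal is correct and follows essentially the same sequential pruning/averaging argument as the paper: both observe that large primes $p>2k+2$ and $p=2$ impose no constraint, and for the remaining small primes both choose $a_p$ to minimize the number of bad indices, losing a factor of roughly $1-2/p$ at each step and invoking Mertens at the end. Your explicit separation of the regime $p\mid N$ (where the linear roots collapse into the roots of $a^2+1$) is a nice observation, but the paper's version absorbs this case automatically since those indices are never removed; otherwise the arguments are the same.
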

\begin{proof}
First, we notice that the set of solutions to
\begin{equation}\label{zero_equation}
(x^2+1)\prod_{i=1}^k (m_i x + n_i) (n_i x - m_i) \equiv 0 \mod p
\end{equation}
is
$$ x\equiv \pm \sqrt{-1} , \frac{-n_i}{m_i} , \frac{m_i}{n_i} (\bmod p). $$
So, if $p>2k+2$, then we can find some $a_p$, such that $a_p\not\equiv \pm \sqrt{-1}, \frac{-n_i}{m_i}, \frac{m_i}{n_i} (\bmod p)$ for all $i\leq k$. Therefore, we just need to look at $p\leq 2k+2$. We list the primes $p\leq 2k+2$ as
$$ p_1 < p_2 < \cdots < p_{\ell} \leq 2k+2 < p_{\ell+1} .$$ 
Let $I_0 := [k]$ and define recursively
\begin{align*}
S_j(p_u) &= \{ i\in I_{u-1} : p_u| (m_i j + n_i)(n_i j - m_i)\} \\
I_u &= I_{u-1}\setminus S_{j(p_u)}(p_u)
\end{align*}
for $1\leq u\leq \ell$, where $j(p)$ is chosen such that $S_{j(p)}(p)$ is of minimal size. We also want $p\nmid j(p)^2+1$.

\noindent Moreover, noticing for any $i\in I_{u-1}$, we have $i \in S_j(p_u)$ for $j \equiv \frac{-n_i}{m_i} ,\frac{m_i}{n_i}(\bmod p_u )$, we get
$$ (p_u-1- \chi_4(p) ) \# S_{j(p_u)}(p_u) \leq \sum_{\substack{j(\bmod  p_u)  \\ p_u \nmid j^2+1}} \# S_{j}(p_u) \leq \sum_{i\in I_{u-1}} 2 = 2\cdot \# I_{u-1} ,$$
for $2\leq u \leq \ell$. For the remaining $p=2$, we notice since $2\|(m_i^2+n_i^2)$, so $m_i , n_i$ are odd, we have $S_{j(2)}(2)= \emptyset$ where $j(2)=0$.

\noindent Now, we set $I:=I_{\ell}$, this satisfies the admissibility condition $\eqref{admissibility_condition}$ by the definition of $S_j(p)$ for $a_p=j(p)$. Lastly, we have
\begin{align*}
\# I &= \# I_{\ell-1} - \#S_{j(p_{\ell})}(p_{\ell}) \\
&\geq \# I_{\ell-1} \bigg( 1 - \frac{2}{p_{\ell}-1-\chi_4(p)} \bigg) \\
&\geq \# I_0 \prod_{3<p\leq 2k+2} \bigg( 1 - \frac{2}{p-1-\chi_4(p)} \bigg) \gg \frac{k}{\log^2 k} ,
\end{align*}
by Mertens' theorem, since
\begin{equation*}
\frac{1}{p-1-\chi_4(p)} = \frac{1}{p} +  O\bigg(\frac{1}{p^2}\bigg) .  \qedhere
\end{equation*}
\end{proof}
\noindent We will use this in the case $m_i^2+n_i^2$ is squarefree, which satisfies the coprimality condition.

\section{Counting Prime Representations}\label{fourth_section}
In this section we give the main idea of the proof. Fix $k\in \Z_{\geq 2}$.

We will always use $n\in\mathbb{N}$ for the number with $k$ representations as the sum of two prime squares, $n=p_i^2+q_i^2$ for $i\in [k]$. We let $P^+(n)=p'$ be the largest prime factor of $n$ and the number $N=n/p'$ will always be an integer with $k$ representations as the sum of two squares, $N=m_i^2+n_i^2$ for $i\in [k]$. 

\noindent Define also a smoothing variable $y$ as
$$ y= x^{\frac{1}{\log\log x}}.  $$

Let $n\in \mathbb{N}$ be, as we have stated, such that, 
$$ n:=p_1^2+q_1^2 = \cdots = p_k^2+q_k^2 $$
for $(\vect{p} , \vect{q})\in \CH_k(x)$, where $\CH_k(x)$ is defined in $\eqref{S_k_definition}$.

\noindent We see that if $p\equiv 3 \mod 4$ divides $n$, then it must divide each $p_i , q_i$ for $i\in [k]$, but this is not counted in $\CH_k(x)$. Hence, any prime divisor of $n$ must be $1\mod 4$. Define $p':=P^+(n)$, then $p'=r^2+s^2$ for some $r,s\in\mathbb{N}$.

\noindent Moreover, if we let,
$$ N:= \frac{n}{p'} ,$$
then we have $P^+(N)\leq p'$. Now since $r^2+s^2$ is a prime, then $r+ i s$ should be a prime in $\Z[i]$. Then since $r+i s | (p_j^2+q_j^2)= (p_j + i q_j)(p_j - i q_j)$ for every $j\in [k]$, thus it divides one of multiples. Without loss of generality say $r+i s | p_j + i q_j$, and then we get $\frac{p_j + i q_j}{r+i s} = m_j + i n_j$. Thus, we can find $\{m_1 , \cdots , n_k\} \subset \Z$ such that
\begin{align*}
N=m_1^2+n_1^2 &= \cdots = m_k^2+n_k^2 \\
p_i = m_i r - n_i s &\text{ and } q_i = n_i r + m_i s , \forall i\in [k].
\end{align*}
Hence, if we run through $(\vect{m},\vect{n})$ and $(r,s)$, as they run through the values where $m_i r- n_i s$ and $n_i r + m_i s$ are prime, we are counting the representations $(p_i , q_i)$. Thus, we define the following quantities to capture this behaviour,
\begin{align*}
f_k^*(z,(\vect{m}, \vect{n})) &:= \#\{ (r,s)\in \mathcal{C}(z,(\vect{m},\vect{n})) : P^+(N)\leq r^2+s^2  , (\ref{what_we_want}) \text{ holds, }\forall i\in[k] \}, \\
f_k(z,(\vect{m}, \vect{n})) &:= \#\{ (r,s)\in \mathcal{C}(z,(\vect{m},\vect{n})) : (\ref{what_we_want}) \text{ holds, }\forall i\in[k] \} ,
\end{align*}
where
$$ \mathcal{C}(z,(\vect{m},\vect{n})):=\{ (r,s)\in \mathbb{N}^2 : r^2+s^2\leq z , 0 < m_i r - n_i s < n_i r + m_i s  \} .$$
We notice $f_k^*(z, (\vect{m},\vect{n}))\leq f_k(z,  (\vect{m},\vect{n}))$. We also have, by sieve theory, good bounds for $f_k(z,(\vect{m},\vect{n}))$ when $z$ is large.

\begin{remark} \label{N_conditions}
We notice that to make $\eqref{what_we_want}$ prime, any $N=m_1^2+n_1^2 =\cdots = m_k^2+n_k^2$ must satisfy,
$$\gcd(m_i , n_i)=1 , \forall i\in[k], $$
which implies
$$ 2\|N \text{ and } N \in \CM = \{ u\in \N : p|u \implies p=2 \text{ or } p\equiv 1 \mod 4 \} , $$
as otherwise we would have $2|m_i , n_i$ or $p|m_i , n_i$ for some $p\equiv 3\mod 4$, which is impossible as $m_i r - n_i s$ can't be prime. So, we can use Lemma $\ref{r_2_inequality}$ to split $r_0$ over divisors for an upper bound. 
\end{remark}
\noindent We also notice that in the definition of $\CH_k(x)$ in $\eqref{S_k_definition}$, we have the condition $(p_i , q_i)\not=(p_j , q_j)$. Since $\gcd(r,s)=1$, $(p_i,q_i)=(p_j,q_j)$ is equivalent to
\begin{align*}
\begin{array}{ll} m_i r - n_i s= m_j r - n_j s \\ n_i r + m_i s = n_j r + m_j s \end{array} &\iff \begin{array}{ll} r(m_i - m_j) = s(n_j - n_i) \\ r(n_i - n_j) = s(m_j - m_i) \end{array} \\
&\implies r|(n_j - n_i) \text{ and } r^2 | (m_j - m_i) \\
&\implies r^3 | (n_j - n_i) \text{ and } r^4 | (m_j - m_i) \\
&\implies r^b | (n_j - n_i) \text{ and } r^b | (m_j - m_i) , \forall b\geq 1 ,
\end{align*}
similarly for $s$, and since $r$ and $s$ can't both be 1, we get that $(m_i , n_i)=(m_j , n_j)$. Therefore, we define the set of $N$,
$$ \CA (x) := \{ (\vect{m} , \vect{n}) \in \Z^{2k} : m_i^2+n_i^2 \leq x , \forall i\in[k] ,  m_i^2+n_i^2=m_j^2+n_j^2, (m_i,n_i)\not=(m_j,n_j), \forall i\not=j \} .$$
We will need the following equality that we get by $\eqref{number_of_representations}$,
\begin{equation} \label{representations_to_N}
\sum_{\substack{(\vect{m},\vect{n})\in \CA(N) \\ N=m_i^2+n_i^2, \forall i\in [k]}} 1 = k! \binom{4r_0(N)}{k} .
\end{equation}
For the upper bound, when we run through $(\vect{m},\vect{n})\in \CA (x)$ summing $f_k^*(\frac{x}{N}, (\vect{m},\vect{n}) )$, we are overcounting. For this, first we split our sum $\CS_k$ (see the definition in $\eqref{the_single_sum}$) into a $y$-smooth and the non-smooth part. Then for the non-smooth part, we can write it as a sum over $(\vect{m},\vect{n})$ and $(r,s)$, where $r^2+s^2=p'> y$. We further split this into when $y < p' \leq x^{\frac{1}{3}}$ and $x^{\frac{1}{3}} < p' \leq x$. Then we get
$$\CS_k = \sum_{(\vect{p}, \vect{q})\in \CH_k(x)} 1 = \sum_{\substack{(\vect{p}, \vect{q})\in \CH_k(x) \\ P^+(n)\leq y}}1 + \sum_{\substack{(\vect{p}, \vect{q})\in \CH_k(x) \\ y< P^+(n)\leq x^{\frac{1}{3}}}}1 + \sum_{\substack{(\vect{p}, \vect{q})\in \CH_k(x) \\ x^{\frac{1}{3}}<P^+(n)\leq x}}1 .$$
We can use the function $f_k^*(z, (\vect{m},\vect{n}) )$ to bound these sums as
\begin{align}
\leq \sum_{\substack{(\vect{p},\vect{q})\in \CH_k (x) \\  P^+(n)\leq y}} 1 &+ \sum_{\substack{(\vect{m}, \vect{n}) \in \CA(\frac{x}{y}) \\ x^{\frac{2}{3}}\leq N \leq \frac{x}{y}}} \bigg(f_k^*\bigg(\frac{x}{N}, (\vect{m},\vect{n})  \bigg) - f_k^*(y, (\vect{m},\vect{n}) ) \bigg) \label{main_equation} \\
&+ \sum_{(\vect{m},\vect{n})\in \CA(x^{\frac{2}{3}})} \bigg( f_k^* \bigg( \frac{x}{N} , (\vect{m},\vect{n}) \bigg) - f_k^*(x^{\frac{1}{3}}, (\vect{m},\vect{n}) ) \bigg) .\nonumber
\end{align}
Remember that we also have $P^+(N)\leq P^+(n) \leq p'$ in the function $f_k^*$. We bound $f_k^*$ by $f_k$ in the third sum, but keep $f_k^*$ in the second one. Since $m_i^2+n_i^2=N=\frac{n}{p'} \leq \frac{x}{p'}$, we have the bounds on $N=m_i^2+n_i^2$ in $\eqref{main_equation}$.

\noindent For the lower bound, we need to undercount. For this, we restrict our $n$ to be squarefree. Then $N$ is also squarefree and we have $\gcd(N,p')=1$, hence $P^+(N)<p'$ and
$$ r_0(n)=r_0(N)r_0(p') . $$
Hence, when summing $f_k^*(x/N, (\vect{m},\vect{n}) )$ over $(\vect{m},\vect{n})\in \CA(x)$, we count all the representations provided that $P^+(N)<p'$. On restricting our conditions $N\leq x$ and $p' \leq \frac{x}{N}$ to $x^{\frac{1}{3}} \leq N \leq x^{\frac{3}{8}}$ and $x^{\frac{1}{2}} \leq p' \leq \frac{x}{N}$, we can get rid of the condition $P^+(N) < p'$, and get
\begin{equation}\label{rough_lower_bound}
\CS_k \geq \sum_{\substack{(\vect{m} , \vect{n})\in \CA(x^{\frac{3}{8}}) \\ x^{\frac{1}{3}} \leq N \leq x^{\frac{3}{8}} \\ N \text{ squarefree} }} \bigg( f_k \bigg( \frac{x}{N} ,  (\vect{m},\vect{n}) \bigg) - f_k(x^{\frac{1}{2}},  (\vect{m},\vect{n})) \bigg).
\end{equation}

\noindent Using the Selberg sieve, we will prove:
\begin{lemma}
Let $z$ be a sufficiently large number. Assume that $m_i , n_i \in \Z $ with $\gcd(m_i , n_i)=1$ for $i\in [k]$. Then we have the following bound
\begin{equation} \label{sieve_bound}
f_k(z,  (\vect{m},\vect{n})) \ll_k \mathfrak{S}(m_1, n_1 , \cdots , m_k , n_k) \frac{z}{\log^{2k+1} z},
\end{equation}
where
\begin{equation}\label{singular_series_def}
\mathfrak{S}(m_1, n_1 , \cdots , m_k , n_k) = \prod_p \bigg(1 - \frac{\nu_p(m_1, \cdots , n_k)}{p^2} \bigg) \bigg(1 - \frac{1}{p} \bigg)^{-(2k+1)} , 
\end{equation}
and
$$ \nu_p(m_1, n_1, \cdots , m_k , n_k) = \#\Big\{ (r,s)\in\mathbb{F}_p^2 : (r^2+s^2)\prod_{i=1}^k (m_i r - n_i s)(n_i r + m_i s) =0 \Big\} . $$
\end{lemma}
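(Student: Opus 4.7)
The plan is to apply Selberg's upper-bound sieve to the planar region $\CC := \CC(z,(\vect{m},\vect{n}))$, sifting by the prime divisors of
\[
\Omega(r,s) := (r^2+s^2)\prod_{i=1}^{k}(m_i r-n_i s)(n_i r+m_i s),
\]
a polynomial of degree $2k+2$ that splits into $2k$ linear forms and one binary quadratic form. Any $(r,s)$ counted by $f_k(z,(\vect{m},\vect{n}))$ must make every factor of $\Omega(r,s)$ prime, hence it suffices to bound the larger count of $(r,s)\in\CC$ whose $\Omega(r,s)$ has no prime divisor $\leq z^{\eta}$, with $\eta=\eta(k)>0$ small. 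This is effectively a $(2k{+}1)$-dimensional sieve problem in the two variables $(r,s)$.

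First I would record the local factors. For every prime $p$, the number of $(r,s)\in\F_p^2$ with $p\mid \Omega(r,s)$ is by definition $\nu_p=\nu_p(m_1,\ldots,n_k)$; the coprimality $\gcd(m_i,n_i)=1$ ensures each linear form is not identically zero mod $p$ (cutting out a line of $p$ points), while $r^2+s^2\equiv 0\pmod p$ contributes $\leq 2p-1$ solutions. Inclusion--exclusion gives $\nu_p\leq (2k+1)p<p^2$ for all $p$, and $\nu_p=(2k+1)p+O_k(1)$ once $p$ exceeds an explicit threshold depending only on $k$. Extending $\nu$ multiplicatively by the Chinese remainder theorem and counting lattice points in $\CC$ along residue classes yields, for squarefree $d$,
\[
\#\{(r,s)\in\CC:\,d\mid \Omega(r,s)\}=\frac{\nu_d}{d^2}\,|\CC|+O(\nu_d),\qquad |\CC|\ll z.
\]

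Then I would run Selberg's sieve at level $D$, which produces
\[
f_k(z,(\vect{m},\vect{n}))\leq\frac{|\CC|}{G(D)}+R(D),\qquad G(D)=\sum_{\substack{d\leq \sqrt{D}\\ \mu^2(d)=1}}\prod_{p\mid d}\frac{\nu_p/p^2}{1-\nu_p/p^2},
\]
with remainder $R(D)\ll\sum_{d\leq D}3^{\omega(d)}\nu_d\ll D^2(\log D)^{O(k)}$. The key observation is that the multiplicative weight has local value $(2k+1)/p+O_k(1/p^2)$ at generic primes, so by standard Euler-product / Wirsing-type arguments (in the spirit of Lemma~\ref{lower_bound_r_2}),
\[
G(D)\asymp_k\frac{(\log D)^{2k+1}}{\mathfrak{S}(m_1,\ldots,n_k)},
\]
where the identity $1+(\nu_p/p^2)(1-\nu_p/p^2)^{-1}=(1-\nu_p/p^2)^{-1}$ cleanly produces the singular series \eqref{singular_series_def} once the factor $\prod_p(1-1/p)^{-(2k+1)}$ is pulled out. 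Taking $D=z^{1/3}$ makes $R(D)$ negligible against the main term of order $z/(\log z)^{2k+1}$, delivering the claimed bound.

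\textbf{The main obstacle} is the uniform asymptotic for $G(D)$ in the last step: the local factor $\nu_p/p^2$ departs from its generic shape precisely at those (finitely many) primes dividing some $m_i n_j-m_j n_i$ or $m_i^2+n_i^2$, or at which two linear forms become proportional in $\mathbb{P}(\F_p)$. One must verify that these exceptional Euler factors are absorbed \emph{exactly} into $\mathfrak{S}$, uniformly in $(\vect{m},\vect{n})$; the coprimality $\gcd(m_i,n_i)=1$ is precisely what guarantees $\nu_p<p^2$ at every $p$ (so $\mathfrak{S}$ is finite and positive), and the convergence of the product is controlled by the bound $\nu_p=(2k+1)p+O_k(1)$ valid for $p$ larger than an explicit constant depending only on $k$.
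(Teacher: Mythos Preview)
Your approach is essentially the same as the paper's: both apply the Selberg sieve to the sequence $\CA_d=\#\{(r,s):d\mid\Omega(r,s)\}$ with multiplicative density $g(d)=\nu(d)/d^2$, identify the sieve dimension as $2k+1$, and read off the singular series $\mathfrak{S}$ from the local Euler factors. The paper is terser, checking only the hypotheses $g(d)d\geq 1$ and $\sum_{w\leq p\leq x}g(p)\log p\ll\log(2x/w)$ before invoking Iwaniec--Kowalski Theorem~6.5, whereas you write out $G(D)$ and the remainder explicitly and take $D=z^{1/3}$; these are equivalent executions of the same idea.

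Two factual slips are worth flagging. First, the generic local count is not $\nu_p=(2k+1)p+O_k(1)$ but $\nu_p=(2k+1+\chi_4(p))(p-1)+1$ for $p\nmid N\cdot R(\vect m,\vect n)$ (Lemma~\ref{omega_p}): the quadratic $r^2+s^2$ contributes $1+\chi_4(p)$ projective zeros, not one. The sieve dimension is still $2k+1$ on average because $\sum_p\chi_4(p)\log p/p=O(1)$, so your conclusion survives. Second, and more substantively, your assertion that ``the coprimality $\gcd(m_i,n_i)=1$ is precisely what guarantees $\nu_p<p^2$ at every $p$'' is wrong. Coprimality only makes each individual linear form nondegenerate mod $p$; for small primes $p\leq 2k+2$ the $\leq 2k+2$ projective zeros of $\Omega$ can still cover all of $\mathbb{P}^1(\F_p)$. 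The condition $\nu_p<p^2$ for all $p$ is exactly the admissibility of Definition~\ref{admissibility}, not coprimality. When $\mathfrak S=0$ the bound $f_k\ll 0$ is in fact false (points on the line where one form equals the obstructing prime give $f_k$ of size up to $\sqrt z/\log^{2k}z$), though this edge case is harmless for the paper's applications and the paper's own proof does not flag it either.
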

\begin{proof}
Let $P:=P(z^{\frac{1}{2k+1}})$ where $P(B) := \prod_{p\leq B} p$. We have
$$ f_k(z, (\vect{m},\vect{n})) \leq \sum_{\substack{|r|,|s|\leq \sqrt{z} \\ (\ref{what_we_want}) \text{ holds}}} 1 \leq \sum_{|r|,|s|\leq \sqrt{z}} \bigg( \sum_{\substack{ d| P \\ d|(r^2+s^2)\prod_{i=1}^k (m_i r - n_i s)(n_i r + m_i s)}} \lambda_d \bigg)^2 ,$$
where $\lambda_d$ are real numbers with the condition $\lambda_1=1$.

\noindent We let $\nu, \mu \mod d$ be solutions to $(\nu^2+\mu^2)\prod_{i=1}^k(m_i \nu - n_i \mu)(n_i \nu + m_i \mu) \equiv 0 \mod d$, then for each solution, we are counting $|r|,|s|\leq \sqrt{z}$ such that $r\equiv \nu \mod d$ and $s\equiv \mu \mod d$. Thence we get, for $d|P$
\begin{align*}
\CA_d = \sum_{\substack{|r|,|s| \leq \sqrt{z} \\ d| (r^2+s^2)\prod_{i=1}^k (m_i r - n_i s)(n_i r + m_i s)}} 1 &= \sum_{\substack{\nu , \mu (\bmod d) \\ (\nu^2+\mu^2)\prod_{i=1}^k(m_i \nu - n_i \mu)(n_i \nu + m_i \mu) \equiv 0 (\bmod d)}} \sum_{\substack{|r|,|s|\leq \sqrt{z} \\ r\equiv \nu (\bmod d) \\ s\equiv \mu (\bmod d)}} 1 \\
&= g(d) 4 z + O (g(d) d \sqrt{z} ) = 4\sqrt{z} \bigg(  g(d) \sqrt{z} + O (g(d) d ) \bigg),
\end{align*}
where $g(d)=\frac{\nu(d)}{d^2}$ is a multiplicative function with 
$$\nu(D) = \#\{ (r,s)\in (\Z/D\Z)^2 : (r^2+s^2)\prod_{i=1}^k(m_i r - n_i s)(n_i r + m_i s) \equiv 0 \mod D \} .$$
Notice that $\nu (D) \leq D^2$ and $\nu(p)=\nu_p(m_1,\cdots ,n_k)$. Now, let $p|P$, we see that by Remark $\ref{N_conditions}$, since $\gcd(m_1, n_1)=1$, $p$ can't divide both $m_1$ and $n_1$, without loss of generality assume $\gcd(m_1 , p)=1$. Then $(n_1 : m_1)$ is a projective solution of
$$ (r^2+s^2)\prod_{i=1}^k (m_i r - n_i s)(n_i r + m_i s) = 0  $$
in $\mathbb{P}(\mathbb{F}_p)$. This gives $p$ many solutions in $\F_p^2$, hence, we have $\nu(p)\geq p$, which implies $g(d)d = \nu(d) / d \geq 1$ for any $d|P$, since $g(d)$ is multiplicative and $P$ is squarefree. We also have
$$ \sum_{w\leq  p\leq x} g(p) \log p = \sum_{w\leq p \leq x} \frac{\nu_p(m_1, \cdots ,n_k)}{p^2}\log p \ll_k \sum_{w\leq p\leq x} \frac{\log p}{p} \ll \log (2x/w) $$
because $(r^2+s^2) \prod_{i=1}^k(m_i r - n_i s)(n_i r + m_i s)$ has at most $(2k+2)p+1$ solutions $\bmod p$. We will see more details on $\nu_p(m_1, \cdots , n_k)$ in Lemma $\ref{omega_p}$.

\noindent Therefore, we get the result by applying the Selberg sieve, see Iwaniec-Kowalski \cite[Theorem 6.5]{MR2061214}.
\end{proof}
We want to understand the singular series $\mathfrak{S}(m_1 , \cdots  , n_k)$. To this end, we define
\begin{equation} \label{Chaotic_Product}
R(\vect{m}, \vect{n})= \prod_{i<j} R(m_i,m_j, n_i , n_j),
\end{equation}
where
$$ R(m_i , m_j , n_i , n_j)= (m_i n_j - m_j n_i)(m_i m_j+n_i n_j) .$$
We have the following lemma on $\nu_p$.
\begin{lemma}
\label{omega_p}
Assume $m_i , n_i \in \Z$ with $\gcd(m_i , n_i)=1$ for $i\in [k]$ and $N=m_1^2+n_1^2 = \cdots = m_k^2+n_k^2$. Then we have
$$ \nu_p(m_1 , \cdots , n_k) = q_p(p-1)+1 , $$
where
$$ q_p = \left\{ \begin{array}{lcr} 1+\chi_4(p), & p|N  \\ 2k+1+\chi_4(p), & p\nmid N \cdot R(\vect{m}, \vect{n}) \end{array} \right.$$
and 
$$3+\chi_4(p) \leq q_p < 2k+1+\chi_4(p) \text{ for } p\nmid N, \text{ but } p| R(\vect{m}, \vect{n}) . $$
Moreover, if $(\vect{m},\vect{n})$ is an admissible representation, then 
$$q_p\leq p \, \text{  and  }\, \mathfrak{S} (m_1 , \cdots , n_k)\not=0 .$$
\end{lemma}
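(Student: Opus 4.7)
The plan is to pass from an affine count to a projective one. The polynomial $F(r,s) := (r^2+s^2)\prod_{i=1}^k (m_i r - n_i s)(n_i r + m_i s)$ is homogeneous of positive degree, so $(0,0)$ is automatically a zero and every nonzero zero in $\F_p^2$ lies in a unique $\F_p^{\times}$-orbit of size $p-1$ corresponding to one projective zero. Hence $\nu_p = q_p(p-1)+1$, where $q_p$ denotes the number of projective zeros of $F$ in $\mathbb{P}(\F_p)$, and the problem reduces to computing $q_p$. Each factor of $F$ contributes explicit projective zeros: using $\gcd(m_i, n_i) = 1$, the form $m_i r - n_i s$ has the single projective zero $(n_i : m_i)$ and $n_i r + m_i s$ has the single zero $(-m_i : n_i)$, while $r^2+s^2$ contributes exactly $1+\chi_4(p)$ zeros (namely $(\pm a : 1)$ with $a^2 \equiv -1$ when $p \equiv 1 \pmod 4$, none when $p \equiv 3 \pmod 4$, and the single zero $(1:1)$ when $p = 2$). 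Thus $q_p$ is the number of distinct elements among these $2k + (1+\chi_4(p))$ listed points, and I will analyze the three cases in turn.

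When $p | N$ and $p$ is odd, the hypothesis $\gcd(m_i, n_i) = 1$ forces $p \equiv 1 \pmod 4$, and $m_i^2 \equiv -n_i^2 \pmod p$ implies $m_i/n_i \equiv \pm a$; this places each of $(n_i : m_i)$ and $(-m_i : n_i)$ inside the two-point set $\{(\pm a : 1)\}$, so $q_p = 2 = 1+\chi_4(p)$. For $p = 2$, $N \in \CM$ forces $m_i, n_i$ both odd, every factor reduces mod $2$ to $r+s$, and $q_2 = 1 = 1+\chi_4(2)$. When $p \nmid N \cdot R(\vect{m}, \vect{n})$, the condition $p \nmid N = m_i^2+n_i^2$ prevents any $(n_i : m_i)$ or $(-m_i : n_i)$ from being a zero of $r^2+s^2$, and the three possible coincidences $(n_i : m_i) = (n_j : m_j)$, $(n_i : m_i) = (-m_j : n_j)$, $(-m_i : n_i) = (-m_j : n_j)$ (for $i \neq j$) translate respectively into $p | (m_i n_j - m_j n_i)$, $p | (m_i m_j + n_i n_j)$, and $p | (m_i n_j - m_j n_i)$, each excluded by $p \nmid R(\vect{m}, \vect{n})$; so all $2k+1+\chi_4(p)$ listed zeros are distinct. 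When $p \nmid N$ but $p | R$, at least one such coincidence must hold, giving the strict upper bound $q_p < 2k+1+\chi_4(p)$; conversely, for any fixed $i$ the points $(n_i : m_i)$ and $(-m_i : n_i)$ are distinct (else $p | m_i^2+n_i^2 = N$) and neither coincides with a zero of $r^2+s^2$ (same reason), so together with the $1+\chi_4(p)$ zeros of $r^2+s^2$ they produce at least $3+\chi_4(p)$ distinct projective zeros.

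For the admissibility claim, if $(\vect{m}, \vect{n})$ is admissible with witness $a_p \in \F_p$, then $F(a_p, 1) = (a_p^2+1)\prod_{i=1}^k (m_i a_p - n_i)(n_i a_p + m_i) \not\equiv 0 \pmod p$, so $(a_p : 1)$ is not among the $q_p$ projective zeros; since $|\mathbb{P}(\F_p)| = p+1$, this yields $q_p \leq p$, whence $\nu_p \leq p(p-1)+1 = p^2 - p + 1 < p^2$ and each Euler factor $1 - \nu_p/p^2$ is strictly positive, so $\mathfrak{S}(m_1, \ldots, n_k) \neq 0$. The main subtle point is the lower bound $q_p \geq 3+\chi_4(p)$ in the case $p \nmid N$ with $p | R$: one needs that coincidences among the linear zeros cannot collapse the contribution of every index simultaneously, and this is where the hypothesis $p \nmid N$ (rather than merely $p \nmid R$) is used to guarantee that the linear zeros attached to any single index $i$ remain separated both from each other and from the zeros of $r^2+s^2$.
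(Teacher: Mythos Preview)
Your proof is correct and follows essentially the same projective-counting approach as the paper: reduce to counting zeros in $\mathbb{P}(\F_p)$, list the candidate zeros coming from each factor, and analyze coincidences case by case. You are slightly more explicit than the paper (handling $p=2$ separately, and deducing $q_p \leq p$ directly from the witness $a_p$ before concluding $\mathfrak{S} \neq 0$, whereas the paper asserts $\mathfrak{S} \neq 0$ first and then reads off $q_p \leq p$), but these are cosmetic differences.
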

\begin{proof}
By looking at 
\begin{equation}\label{equation_roots}
(r^2+s^2)\prod_{i=1}^k (m_i r - n_i s )(n_i r + m_i s)=0
\end{equation}
in $\mathbb{P}(\F_p)$, we can write
$$\nu_p = q_p (p-1)+1.$$
since $\gcd(m_i , n_i)=1$ for $i\in [k]$ and the equation is homogeneous. 

\noindent \textsf{Case 1.}
Assume $p|N$. We know that the non-zero solutions of $\eqref{equation_roots}$ belong to
\begin{equation} \label{solutions}
\{(1 : \sqrt{-1}),(-1 : \sqrt{-1}) , (n_1 :  m_1), (- m_1 : n_1), \cdots , (n_k : m_k) , (-m_k: n_k)\}
\end{equation}
in the projective space over $\mathbb{F}_p$. Now, since $p\nmid m_i$, $p\nmid n_i$, then the solutions $(-m_i : n_i) , (n_i :  m_i)$ give
$$ r^2+s^2 \equiv t^2 (m_i^2+n_i^2) \equiv 0 \mod p $$
for some $t\not=0$. So all the solutions from $\eqref{solutions}$ are the solutions of $r^2+s^2 =0$ in $\mathbb{P}(\F_p)$.

\noindent Hence, the set of solutions is only $\{(1:\sqrt{-1}), (-1: \sqrt{-1})\}$. But this gives $1+\chi_4(p)$ many solutions in the projective space over $\F_p$, depending on whether $p\equiv 1\mod 4$ or $3\mod 4$. Thus, $q_p= 1+\chi_4(p)$. \\
\textsf{Case 2.}
Assume $p\nmid N \cdot R(\vect{m},\vect{n})$. Then the elements $(-m_i : n_i )$ and $(n_j : m_j)$ of the set $\eqref{solutions}$ are distinct since $p\nmid (m_i m_j + n_i n_j)$ and similarly $(-m_i : n_i)$ and $(-m_j : n_j)$. By a similar argument to case 1, they are different from $(\pm 1 : \sqrt{-1})$. Hence, in total we have $q_p = 2k+1+\chi_4(p)$. \\
\textsf{Case 3.}
Assume $p\nmid N$, but $p| R(\vect{m}, \vect{n})$. Then we have some of the $(-m_i : n_i)$ or $(n_i : m_i)$ in $\eqref{solutions}$ as the same solutions. But $p$ can't divide both $m_i n_j - m_j n_i$ and $m_i m_j + n_i n_j$ since otherwise $p|N$. Then we expect the solutions $\{(1:\sqrt{-1}) , (-1 : \sqrt{-1})\}$ and also at least two more solution from $\eqref{solutions}$. Hence, $q_p \geq 3 +\chi_4(p)$. The upper bound is trivial, since we can't have more solutions than the set $\eqref{solutions}$ and not all of them are distinct.

\noindent Lastly, if $(\vect{m},\vect{n})$ is an admissible representation, then we know $\mathfrak{S}(m_1,\cdots , n_k)\not=0$ by Definition $\ref{admissibility}$. Now, since $q_p\leq p+1$, and $\nu_p(m_1,\cdots , n_k) = q_p(p-1)+1 = p^2$ whenever $q_p=p+1$, we get that $q_p\leq p$, because $\mathfrak{S}(m_1,\cdots ,n_k)\not=0$.
\end{proof}
\noindent By the above lemma, for an admissible representation $(\vect{m},\vect{n})$ the singular series $\eqref{singular_series_def}$ can be rewritten as
$$ \mathfrak{S}(m_1, \cdots ,n_k) = \prod_p \bigg( 1- \frac{q_p-1}{p} \bigg) \bigg(1 -  \frac{1}{p} \bigg)^{-2k} \asymp_k \prod_p \bigg( 1 - \frac{q_p-1}{p}\bigg) \bigg(1 + \frac{2k}{p} \bigg) $$
since $\prod_p (1-\frac{1}{p})^{-2k} (1 + \frac{2k}{p})^{-1} \asymp_k 1$. By multiplying the factors of the product and bounding the contribution of the $p^2$ factor we get
$$ \mathfrak{S}(m_1 , \cdots , n_k) \asymp_k \prod_p \bigg(1 + \frac{2k+1-q_p}{p} \bigg) . $$
We can multiply and divide by $L(1, \chi_4)\asymp \prod_p (1+\frac{\chi_4(p)}{p})$ to get
$$\mathfrak{S}(m_1 , \cdots , n_k) \asymp_k \prod_p \bigg(1+\frac{\chi_4(p)}{p} \bigg) \bigg(1 + \frac{2k+1-q_p}{p} \bigg) .$$
We can multiply and get rid of terms with $1/p^2$ again to obtain
$$ \mathfrak{S}(m_1, \cdots ,n_k) \asymp_k \prod_p \bigg(1 + \frac{2k+1+\chi_4(p)-q_p}{p} \bigg) .$$
By Lemma $\ref{omega_p}$, $q_p\leq 2k+1+\chi_4(p)$, thus,
\begin{equation} \label{S_lower_bound}
\mathfrak{S}(m_1 , \cdots , n_k) \gg_k 1.
\end{equation}
Also by Lemma $\ref{omega_p}$, $q_p \geq 1+\chi_4(p) $ for $p|N \cdot R(\vect{m},\vect{n})$ and $q_p= 2k+1+\chi_4(p)$ otherwise. Hence, we have
$$ \mathfrak{S}(m_1, \cdots ,n_k) \ll_k \prod_{p|N \cdot R(\vect{m}, \vect{n})} \bigg(1+ \frac{2k}{p} \bigg) $$
Applying AM-GM inequality and getting rid of $1/p^2$ terms, we have
\begin{align*}
 \mathfrak{S}(m_1 , \cdots ,n_k) &\ll_k \prod_{p|N} \bigg( 1+\frac{2k}{p}\bigg)^2 + \prod_{\substack{p| R(\vect{m} , \vect{n}) \\ p\nmid N}} \bigg( 1+\frac{2k}{p} \bigg)^2 \\
&\asymp_k \prod_{p|N} \bigg(1+\frac{4k}{p} \bigg) + \prod_{\substack{p|R(\vect{m}, \vect{n}) \\ p\nmid N}} \bigg(1+\frac{4k}{p} \bigg)
\end{align*}
since we can bound $\prod_p ( 1 +\frac{2k}{p})^2 (1+\frac{4k}{p})^{-1} \asymp_k 1$. Then turning the products into sums we get,
\begin{equation}
\label{Singular_Series} \mathfrak{S} (m_1 ,\cdots , n_k) \ll_k \sum_{\ell_1 | N} \frac{\mu^2(\ell_1) (4k)^{\omega(\ell_1)}}{\ell_1} + \sum_{\substack{\ell_2 |R(\vect{m},\vect{n}) \\ \gcd(\ell_2 , N)=1 }} \frac{\mu^2(\ell_2) (4k)^{\omega(\ell_2)}}{\ell_2} 
\end{equation}
for $(\vect{m}, \vect{n})$ an admissible representation. To use this bound for the singular series, we need the admissibility condition. But notice if we don't have an admissible representation, then the singular series is $0$, thus $\eqref{Singular_Series}$ applies whether or not the representation is admissible.

\noindent We can return to our sum $\CS_k$. For bounds on $S_k$, we have by $\eqref{main_equation}$ and $\eqref{rough_lower_bound}$,
\begin{align}
&\label{upper_bound_equation}\CS_k \leq \sum_{(\vect{m} , \vect{n}) \in \CA (x^{\frac{2}{3}})} f_k\bigg(\frac{x}{N} ,  (\vect{m},\vect{n})\bigg) + \sum_{\substack{(\vect{m} , \vect{n}) \in \CA (\frac{x}{y}) \\ x^{\frac{2}{3}} \leq N \leq \frac{x}{y}}} f_k^*\bigg(\frac{x}{N},  (\vect{m},\vect{n}) \bigg) + \sum_{\substack{n\leq x \\ p|n \implies p\leq y}} r_2^k(n) \\
&\label{lower_bound_equation}\CS_k \geq \sum_{\substack{(\vect{m} , \vect{n})\in \CA(x^{\frac{3}{8}}) \\ x^{\frac{1}{3}} \leq N \leq x^{\frac{3}{8}} \\ N \text{ squarefree} }} \bigg( f_k \bigg( \frac{x}{N},  (\vect{m},\vect{n}) \bigg) - f_k(x^{\frac{1}{2}} ,  (\vect{m},\vect{n})) \bigg). 
\end{align}
where we have used the trivial bounds for the upper bound.

\noindent For the upper bound, we show that we can take the smooth part, i.e. the last term, as small as we want. For the non-smooth parts, we first expand the second sum $e$-adically and then split the singular series via $\eqref{Singular_Series}$. Then for the sum with the condition 
$$ \ell_2 | R(\vect{m} , \vect{n}) , $$
in $\eqref{Singular_Series}$, we separate the divisors using $\eqref{Chaotic_Product}$ to give
$$ d_{\{ i,j\}} | R(m_i, m_j , n_i , n_j) \text{ for } i<j\leq k , $$
where $\ell_2 = \prod_{i<j} d_{\{i,j\}}$. We call this procedure condition separation and use it to further simplify summation conditions.

\noindent For the lower bound, we apply Conjecture $\ref{conjectural_lower_bound}$ with $\eqref{S_lower_bound}$ to get a sum over admissible representations and then use Lemma $\ref{admissible_lower_bound}$ to get a sum over a subset without the admissibility condition that we can handle.

\section{Smooth Part of $\CS_k$}
For any $n\in \CM$ (see Lemma $\eqref{r_2_inequality}$), we have $r_2(n) \leq r_0(n)$ and $r_2(n)=0$ otherwise, so, by Rankin's trick with $\sigma = C_k \frac{\log\log x}{\log x}$ for $C_k \geq 2^{k-1}+4$, we get
\begin{align*}
\sum_{\substack{n\leq x \\ p|n\implies p\leq y}} r_2^k(n) &\leq \sum_{\substack{n\leq x \\ n\in \CM \\ p|n\implies p\leq y}} r_0^k(n) \\
&\leq x^{1-\sigma} \sum_{\substack{n\leq x \\ n\in \CM \\ p|n \implies p\leq y}} \frac{r_0^k(n)}{n^{1-\sigma}} \\
&\ll_k x^{1-\sigma} \prod_{\substack{p\leq y \\ p\equiv 1 (\bmod 4)}} \bigg( 1 + \frac{2^k}{p^{1-\sigma}} + O_k\bigg( \frac{1}{p^{2(1-\sigma)}} \bigg) \bigg) \\
&\ll_k x^{1-\sigma} \exp \bigg( \sum_{\substack{p\leq y \\ p\equiv 1 (\bmod 4)}} \frac{2^k}{p^{1-\sigma}} \bigg),
\end{align*}
where to get the last inequality, we took $x$ large enough so that $1-\sigma > \frac{3}{4}$, thus higher order terms contribute $\ll_k 1$. Since $p\leq y$, we can use the Mean Value Theorem to get
$$ \bigg| \frac{1}{p} - \frac{1}{p^{1-\sigma}} \bigg| \leq \sigma \frac{\log p}{p^{1-\sigma}} \leq y^{\sigma}\sigma \frac{\log p}{p} . $$

\noindent Inserting this in the above we obtain
\begin{align*}
\sum_{\substack{n\leq x \\ p|n\implies p\leq y}} r_2^k(n)&\ll_k \frac{x}{\log^{C_k}x} \exp \bigg( 2^k \sum_{\substack{p\leq y \\ p\equiv 1(\bmod 4)}} \frac{1}{p} + O_k \bigg(y^{C_k\frac{\log\log x}{\log x}} \frac{\log\log x}{\log x}\sum_{p\leq y} \frac{\log p}{p} \bigg) \bigg) \\
&= \frac{x}{\log^{C_k}x} \exp\bigg( 2^k \sum_{\substack{p\leq y \\ p\equiv 1 (\bmod 4)}} \frac{1}{p} + O_k (1)  \bigg) \ll_k \frac{x}{\log^4 x} .
\end{align*}
\section{Non-Smooth Part of $\CS_k$}\label{sixth_section}
Now we look at the main sum in $\eqref{upper_bound_equation}$
\begin{equation}\label{upper_bound_reference}
\sum_{(\vect{m}, \vect{n})\in \CA (x^{\frac{2}{3}} )} f_k \bigg( \frac{x}{N} ,  (\vect{m},\vect{n})\bigg) + \sum_{\substack{(\vect{m}, \vect{n})\in \CA(\frac{x}{y}) \\ x^{\frac{2}{3}} \leq N \leq \frac{x}{y}}} f_k^* \bigg( \frac{x}{N},  (\vect{m},\vect{n}) \bigg) .
\end{equation}
Now, we expand $e$-adically the $f_k^*$ in the second sum
$$ \sum_{\substack{(\vect{m}, \vect{n})\in \CA (\frac{x}{y}) \\ x^{\frac{2}{3}} \leq N \leq \frac{x}{y} }} \sum_{ \log y \leq \ell \leq \log x/N} (f_k^*(e^{\ell+1},  (\vect{m},\vect{n})) - f_k^*(e^{\ell},  (\vect{m},\vect{n})) ) \leq \sum_{\log y \leq \ell \leq \log x^{1/3}} \sum_{\substack{(\vect{m}, \vect{n}) \in \CA (\frac{x}{e^{\ell}}) \\ P^+(N)\leq e^{\ell+1} }} f_k(e^{\ell+1},  (\vect{m},\vect{n}))  ,$$
since $N\leq \frac{x}{p'} \leq \frac{x}{e^{\ell}}$, $P^+(N)\leq p'\leq e^{\ell+1}$, and $f_k^*(z,  (\vect{m},\vect{n}))\leq f_k(z,  (\vect{m},\vect{n}))$.
Now, we use our sieve theory bound $\eqref{sieve_bound}$ to bound the sum in $\eqref{upper_bound_reference}$ by
\begin{equation}\label{main_upper_bound}
\ll_k x \sum_{\substack{(\vect{m}, \vect{n})\in \CA (x^{\frac{2}{3}} )}} \frac{\mathfrak{S}(m_1, \cdots , n_k)}{N(\log x/N)^{2k+1}} + \sum_{\log y \leq \ell \leq \log x^{1/3}} \frac{e^{\ell}}{\ell^{2k+1}} \sum_{\substack{(\vect{m}, \vect{n})\in \CA(\frac{x}{e^{\ell}}) \\ P^+(N)\leq e^{\ell+1}}} \mathfrak{S}(m_1 , \cdots ,n_k) .
\end{equation}
We start with the first sum, and bound the second one in Section $\ref{dyadic_sum}$. As $N\leq x^{\frac{2}{3}}$ in the first summand above we can take out the $(\log x/N)^{-(2k+1)}$ as $(\log x)^{-(2k+1)}$. This gives us
$$ \ll_k  \frac{x}{\log^{2k+1} x} (M_1 + M_2), $$
where
\begin{align*}
M_1 &= \sum_{(\vect{m}, \vect{n})\in \CA (x^{\frac{2}{3}} )} \frac{1}{N}\sum_{\ell_1 | N} \frac{\mu^2(\ell_1) (4k)^{\omega(\ell_1)}}{\ell_1} \\
M_2 &= \sum_{(\vect{m}, \vect{n})\in \CA (x^{\frac{2}{3}} )} \frac{1}{N}\sum_{\substack{\ell_2 | R(\vect{m},\vect{n}) \\ \gcd(\ell_2 , N)=1}} \frac{\mu^2(\ell_2) (4k)^{\omega(\ell_2)}}{\ell_2} .
\end{align*}
\noindent Our aim is to show that $M_i \ll_k (\log x)^{2^{k-1}}$ for $i=1,2$. \\
We start with $M_1$ and change the order of summation,
$$ M_1 = \sum_{\ell_1} \frac{\mu^2(\ell_1)(4k)^{\omega(\ell_1)}}{\ell_1} \sum_{\substack{(\vect{m},\vect{n})\in \CA(x^{\frac{2}{3}}) \\ \ell_1 | N }} \frac{1}{N} ,$$
but the inner sum is equal to
\begin{equation}\label{bound_for_representations}
\sum_{\substack{N\leq x^{\frac{2}{3}} \\ \ell_1 |N}} \frac{1}{N} \sum_{\substack{(\vect{m}, \vect{n})\in \CA(N) \\ N=m_i^2+n_i^2, \forall i\in[k]}} 1 \leq \sum_{\substack{N\leq x^{\frac{2}{3}} \\ \ell_1 | N}} \frac{(4r_0(N))^k}{N} \ll_k \sum_{\substack{N\leq x^{\frac{2}{3}} \\ \ell_1 | N}} \frac{r_0^k(N)}{N}  ,
\end{equation}
by $\eqref{representations_to_N}$. Thus, using this, we get
\begin{align*}
M_1 &\ll_k \sum_{\ell_1} \frac{\mu^2(\ell_1) (4k)^{\omega(\ell_1)} r_0^k(\ell_1)}{\ell_1^2} \sum_{m\leq \frac{x^{\frac{2}{3}}}{\ell_1}} \frac{r_0^k(m)}{m}  \\
&\ll_k \sum_{\ell_1} \frac{\mu^2(\ell_1) (4k)^{\omega(\ell_1)} r_0^k(\ell_1)}{\ell_1^2} \bigg(\log \frac{x^{\frac{2}{3}}}{\ell_1 }\bigg)^{2^{k-1}}  \\
&\ll_k (\log x)^{2^{k-1}} \prod_{p\equiv 1(\bmod 4)} \bigg( 1+\frac{2^k 4k}{p^2} \bigg)  \ll_k (\log x)^{2^{k-1}} , 
\end{align*}
where we use Lemma \ref{r_2_inequality} in the first line with $N=m\ell_1$ and Lemma \ref{r_2_bound} for the second line.
\section{Upper Bound for $M_2$ and Condition Separation}\label{seventh_section}
In this section, we bound $M_2$ using condition separation. To do this, we first let $K=\binom{k}{2}$ and $X= x^{\frac{2}{3}}$. Then we have, by condition separation
\begin{align*}
M_2 &= \sum_{\ell_2} \frac{\mu^2(\ell_2) (4k)^{\omega(\ell_2)}}{\ell_2} \sum_{\substack{(\vect{m}, \vect{n})\in \CA (X ) \\ \ell_2 | R(\vect{m},\vect{n}) \\ \gcd(\ell_2 , N)=1}} \frac{1}{N} \\
&\leq \sum_{\ell_2} \frac{\mu^2(\ell_2) (4k)^{\omega(\ell_2)}}{\ell_2} \sum_{\prod_{1\leq i<j\leq k}d_{\{i,j\}} = \ell_2} \bigg( \prod_{1\leq i<j \leq k} M_{d_{\{i,j\}}} \bigg)^{\frac{1}{K}} ,
\end{align*}
where
$$ M_{d_{\{i,j\}}} := \sum_{\substack{(\vect{m}, \vect{n})\in \CA ( X ) \\ d_{\{i,j\}} | R(m_i , m_j, n_i , n_j)  \\ \gcd(d_{\{i,j\}} , N)=1}} \frac{1}{N} $$
We are going to use this idea a couple of times to handle the multiple conditions at the same time. \\
\noindent Our goal is to show $M_{d_{\{i,j\}}} \ll_k (\log x)^{2^{k-1}-1}/ d_{\{i,j\}}^{1/32} $. Without loss of generality we can only look at the sum $M_{d_{\{1,2\}}}$. We also notice that $(m_3 , n_3) , \cdots , (m_k , n_k)$ are free of any conditions, so we can bound them like we did in $\eqref{bound_for_representations}$, and use condition separation again to get
\begin{align}
M_{d_{\{1,2\}}} &\leq \sum_{d_{\{1,2\}}= d_1 d_2} \bigg( \sum_{\substack{(\vect{m}, \vect{n})\in \CA ( X ) \\ d_1 | (m_1 n_2- m_2 n_1) \\ \gcd(d_1 , N)=1}} \frac{1}{N} \bigg)^{\frac{1}{2}} \bigg( \sum_{\substack{(\vect{m}, \vect{n})\in \CA ( X ) \\ d_2 | (m_1 m_2+n_1 n_2) \\ \gcd(d_2 , N)=1}} \frac{1}{N} \bigg)^{\frac{1}{2}} \nonumber \\
&\ll_k \sum_{d_{\{1,2\}}= d_1 d_2} \bigg( \sum_{\substack{ m_1^2+n_1^2 = m_2^2+n_2^2 \leq X \\ d_1 | (m_1 n_2- m_2 n_1) \\ \gcd(d_1 , N)=1}} \frac{r_0^{k-2}(N)}{N} \bigg)^{\frac{1}{2}} \bigg( \sum_{\substack{m_1^2+n_1^2=m_2^2+n_2^2\leq X  \\ d_2 | (m_1 m_2+n_1 n_2) \\ \gcd(d_2 , N)=1}} \frac{r_0^{k-2}(N)}{N} \bigg)^{\frac{1}{2}} . \label{inner_sum_reference}
\end{align}
Now, we take a look at $N=m_1^2+n_1^2=m_2^2+n_2^2$ in $\mathbb{Z}[i]$. Then $m_1^2+n_1^2$ corresponds to the norm of one of $U_1:=\{m_1 + i n_1 , -m_1 + i n_1 , m_1 - i n_1 , -m_1 - i n_1\}$, similarly for $m_2^2+n_2^2$, we have $U_2$. Hence, since they have the same norm, any element of $U_1$, say $z$, and $U_2$, say $w$, can be written as
\begin{align*}
z &= (a+ ib)(c+ i d), \\
w &= u (a+ib)(c-id),
\end{align*}
where $u\in \{1,-1 , i , -i\}$. Since we can work with $w u^{-1}$ instead of $w$, without loss of generality we can write
\begin{align*}
m_1+in_1 &= (a+ib)(c+id) , \\
m_2 + i n_2 &= (a+ ib)(c-id) .
\end{align*}
Consequently, the conditions on the sums become,
\begin{align*}
m_1 n_2 - m_2 n_1 &= -2(a^2+b^2)cd \\
m_1 m_2 + n_1 n_2 &= (c^2+d^2)(a^2-b^2)\\
N &= (a^2+b^2)(c^2+d^2)
\end{align*}
Since $N=(a^2+b^2)(c^2+d^2)$, thus their prime factors other than 2 are $1\mod 4$. So, we can use Lemma $\ref{r_2_inequality}$. \\
\noindent Hence, we can rewrite the inner sums in $\eqref{inner_sum_reference}$ as follows
$$ \sum_{\substack{(a^2+b^2)(c^2+d^2)\leq X \\ d_1 | cd \\ \gcd(d_1,N)=1 }} \frac{r_0^{k-2}(N)}{N} \text{ and } \sum_{\substack{(a^2+b^2)(c^2+d^2)\leq X \\ d_2 | (a^2-b^2) \\ \gcd(d_2,N)=1 }} \frac{r_0^{k-2}(N)}{N} . $$
We notice that we can turn the second sum to look like the first one by replacing $u=a-b , v=a+b$ and using $a^2+b^2=\frac{u^2+v^2}{2}$ and $a^2-b^2=uv$, hence, we only need to work on the first sum. Let $Y=\frac{X}{a^2+b^2}$. By Lemma $\ref{r_2_inequality}$ we have
\begin{align}
&\sum_{\substack{(a^2+b^2)(c^2+d^2)\leq X \\ d_1 | cd }} \frac{r_0^{k-2}(N)}{N} \leq \sum_{a^2+b^2 \leq X} \frac{r_0^{k-2}(a^2+b^2)}{a^2+b^2} \sum_{\substack{c^2+d^2\leq Y \\ d_1 | cd }} \frac{r_0^{k-2}(c^2+d^2)}{c^2+d^2} \nonumber \\
&\leq \sum_{a^2+b^2} \frac{r_0^{k-2}(a^2+b^2)}{a^2+b^2} \sum_{d_1 = d' d''} \bigg( \sum_{\substack{c^2+d^2\leq Y \\ d' | c }} \frac{r_0^{k-2}(c^2+d^2)}{c^2+d^2} \bigg)^{\frac{1}{2}} \bigg( \sum_{\substack{c^2+d^2\leq Y \\ d'' | d }} \frac{r_0^{k-2}(c^2+d^2)}{c^2+d^2} \bigg)^{\frac{1}{2}} \label{this_step},
\end{align}
by condition separation. We can bound the inner sums by Corollary $\ref{Henriot}$ and use $Y\leq X = x^{\frac{2}{3}}$ to get
\begin{align*}
\sum_{\substack{(a^2+b^2)(c^2+d^2)\leq X \\ d_1 | cd}} \frac{r_0^k(N)}{N}&\ll_k \sum_{a^2+b^2 \leq X} \frac{r_0^{k-2}(a^2+b^2)}{a^2+b^2} \sum_{d_1 = d' d''} \bigg( \frac{(\log X)^{2^{k-2}}}{d'^{\frac{1}{2}}} \bigg)^{\frac{1}{2}} \bigg( \frac{(\log X)^{2^{k-2}}}{d''^{\frac{1}{2}}} \bigg)^{\frac{1}{2}} \\
&\ll_k \sum_{a^2+b^2 \leq X} \frac{r_0^{k-2}(a^2+b^2)}{a^2+b^2} \sum_{d_1 = d' d''} \frac{(\log x)^{2^{k-2}}}{d_1^{\frac{1}{4}}} \\
&\ll \frac{\tau(d_1)}{d_1^{\frac{1}{4}}} (\log x)^{2^{k-2}} \sum_{m\leq X} \frac{r_0^{k-1}(m)}{m} \ll_k \frac{(\log x)^{2^{k-1}}}{d_1^{\frac{1}{8}}} ,
\end{align*}
where we have used $\tau(d_1)\ll_{\varepsilon} d_1^{\varepsilon}$ with some small $\varepsilon>0$, and Lemma \ref{r_2_bound} to get the last inequality.

\noindent Now, we can go back in our argument in the same manner to get
$$ M_{d_{\{1,2\}}} \ll_k \frac{\tau(d_{\{1,2\}})}{d_{\{1,2\}}^{\frac{1}{16}}} (\log x)^{2^{k-1}} \ll \frac{(\log x)^{2^{k-1}}}{d_{\{1,2\}}^{\frac{1}{32}}} . $$
Therefore, for $M_2$, acting analogously for each $M_{d_{\{i,j\}}}$,
\begin{align*}
M_2 &\ll_k \sum_{\ell_2} \frac{\mu^2(\ell_2) (4k)^{\omega(\ell_2)}}{\ell_2} \sum_{\prod_{1\leq i<j\leq k}d_{\{i,j\}} = \ell_2} \prod_{i<j}\bigg(  \frac{(\log x)^{2^{k-1}}}{d_{\{i,j\}}^{\frac{1}{32}}} \bigg)^{\frac{1}{K}} \\
&\ll_k \sum_{\ell_2} \frac{\mu^2(\ell_2) (4k)^{\omega(\ell_2)}}{\ell_2} \sum_{\prod_{i<j}d_{\{i,j\}} = \ell_2} \frac{(\log x)^{2^{k-1}}}{\ell_2^{\frac{1}{32K}}} \\
&=(\log x)^{2^{k-1}}\sum_{\ell_2} \frac{\mu^2(\ell_2) (4k)^{\omega(\ell_2)}}{\ell_2}  \frac{\tau_K(\ell_2)}{\ell_2^{\frac{1}{32K}}} \\
&\ll_k (\log x)^{2^{k-1}} \sum_{\ell_2} \frac{\mu^2(\ell_2) (4k)^{\omega(\ell_2)} \tau_K (\ell_2)}{\ell_2^{1+\frac{1}{32K}}} \\
&= (\log x)^{2^{k-1}} \prod_{p} \bigg( 1 + \frac{4k \binom{k}{2}}{p^{1+\frac{1}{32K}}} \bigg) \ll_k (\log x)^{2^{k-1}} ,
\end{align*}
where we use $\tau_K(p)=K=\binom{k}{2}$.
\section{The $e$-adic Sum}\label{dyadic_sum}\label{eight_section}
Recalling the $e$-adic sum from $\eqref{main_upper_bound}$ and using $\eqref{Singular_Series}$, we get
$$ \sum_{\log y \leq \ell \leq \log x^{1/3}} \frac{e^{\ell}}{\ell^{2k+1}} \sum_{\substack{(\vect{m}, \vect{n})\in \CA(\frac{x}{e^{\ell}}) \\ P^+(N)\leq e^{\ell+1}}} \mathfrak{S}(m_1 , \cdots , n_k) \ll_k \sum_{\log y \leq \ell \leq \log x^{1/3}} \frac{e^{\ell}}{\ell^{2k+1}} (M_1'+M_2') ,$$
where
\begin{align*}
M_1' = \sum_{\ell_1} \frac{\mu^2(\ell_1) (4k)^{\omega(\ell_1)}}{\ell_1} \sum_{\substack{(\vect{m} , \vect{n})\in \CA( \frac{x}{e^{\ell}}) \\ P^+(N)\leq e^{\ell+1} \\ \ell_1 |N}} 1 \\
M_2' = \sum_{\ell_2} \frac{\mu^2(\ell_2) (4k)^{\omega(\ell_2)}}{\ell_2} \sum_{\substack{(\vect{m}, \vect{n})\in \CA ( \frac{x}{e^{\ell}} ) \\ P^+(N)\leq e^{\ell+1} \\ \ell_2 | R(\vect{m}, \vect{n})}} 1
\end{align*}
similar to the ones we considered in Section $\ref{sixth_section}$ and $\ref{seventh_section}$. The proof of bound will proceed along the same lines, but now instead of logarithmic average, we work with the mean value. Our aim is to show that $M_i ' \ll_k \frac{x/e^{\ell}}{\log x}\ell^{2^{k-1}}$ for $i=1,2$. This bound would give us for the $e$-adic sum
$$ \sum_{\log y \leq \ell \leq \log x^{1/3}} \frac{e^{\ell}}{\ell^{2k+1}} \sum_{\substack{(\vect{m}, \vect{n})\in \CA(\frac{x}{e^{\ell}}) \\ P^+(N)\leq e^{\ell+1} }}  \mathfrak{S}(m_1 , \cdots , n_k)  \ll_k \frac{x}{\log x} \sum_{\log y \leq \ell \leq \log x^{1/3}} \ell^{2^{k-1}-2k-1}, $$
which is $x(\log x)^{2^{k-1}-2k-1}$ if $2^{k-1} -2k-1\geq 0$, i.e. for $k\geq 5$. However, it gives $\frac{x}{\log x}(\log y)^{2^{k-1}-2k}$ when $2^{k-1}-2k-1\leq -2$, i.e. for $k\leq 3$. For $k=4$, this gives
$$ \frac{x}{\log x} \sum_{\log y \leq \ell \leq \log x^{\frac{1}{3}}} \frac{1}{\ell} \ll x \frac{\log\log\log x}{\log x}. $$ 
Therefore, to finish the proof of Theorem $\ref{main_result}$, we just need to show that $M_i' \ll_k \frac{x/e^{\ell}}{\log x} \ell^{2^{k-1}}$ for $i=1,2$. \\
\noindent Acting similarly to $M_1$, see Section $\ref{sixth_section}$, for $M_1'$ we have
\begin{align*}
\sum_{\substack{(\vect{m}, \vect{n})\in \CA(\frac{x}{e^{\ell}}) \\ P^+(N)\leq e^{\ell+1} \\ \ell_1 |N}} 1 &\leq \sum_{\substack{N\leq \frac{x}{e^{\ell}} \\ P^+(N)\leq e^{\ell+1} \\ \ell_1 |N}} (4r_0(N))^k \\
&\ll_k r_0^k(\ell_1) \sum_{\substack{m\leq \frac{x}{\ell_1 e^{\ell}} \\ P^+(m)\leq e^{\ell+1}}} r_0^k(m) \\
&\ll_k \frac{r_0^k(\ell_1)}{\ell_1} \frac{x/e^{\ell}}{\log x/ e^{\ell}} (\ell+1)^{2^{k-1}} \\
&\ll \frac{r_0^k(\ell_1)}{\ell_1} \frac{x/e^{\ell}}{\log x} (\ell+1)^{2^{k-1}},
\end{align*}
where we used $\eqref{bound_for_representations}$ to get the first line, and Lemma \ref{r_2_inequality} for the second line with $N=m\ell_1$. We also used \eqref{specific_use} for the third line, and the fact that $e^{\ell}\leq x^{1/3}$ for the last line. Putting this in $M_1'$, we get
$$ M_1'\ll_k  \frac{x/e^{\ell}}{\log x}(\ell+1)^{2^{k-1}} \sum_{\ell_1} \frac{\mu^2(\ell_1) (4k)^{\ell_1} r_0^k(\ell_1)}{\ell_1^2}\leq \frac{x/e^{\ell}}{\log x} \ell^{2^{k-1}} \prod_p \bigg(1 + \frac{4k r_0^k(p)}{p^2} \bigg) \ll_k \frac{x/e^{\ell}}{\log x} \ell^{2^{k-1}}.  $$
Now, for the inner sum in $M_2'$, we proceed like we did for $M_2$. We have
$$ M_2' = \sum_{\ell_2} \frac{\mu^2(\ell_2) (4k)^{\omega(\ell_2)}}{\ell_2} \sum_{\substack{(\vect{m},\vect{n})\in \CA(\frac{x}{e^{\ell}}) \\ P^+(N)\leq e^{\ell+1} \\ \ell_2 | R(\vect{m},\vect{n})}} 1, $$
and use condition separation (keeping in mind that $R(\vect{m}, \vect{n})=\prod_{i<j}R(m_i,m_j,n_i,n_j)$, which is defined in $\eqref{Chaotic_Product}$) to get
$$ M_2' \leq \sum_{\ell_2} \frac{\mu^2(\ell_2) (4k)^{\omega(\ell_2)}}{\ell_2} \sum_{\prod_{i<j} d_{\{i,j\}} =\ell_2} \prod_{i<j} \bigg( \sum_{\substack{(\vect{m}, \vect{n})\in \CA(X) \\ P^+(N) \leq e^{\ell+1}\\ d_{\{i,j\}} | R(m_i , m_j , n_i , n_j) }} 1  \bigg)^{1/K} .  $$
We again expand in $\Z[i]$ to get sums over $(a^2+b^2), (c^2+d^2)$ and follow the same steps as in Section $\ref{seventh_section}$. In the sums $M_1'$ and $M_2'$ we work with the mean value instead of the logarithmic average. Therefore, we use Lemma $\ref{Henriot_use}$ instead of Corollary $\ref{Henriot}$ to bound our sums over $c^2+d^2\leq X$ in the step $\eqref{this_step}$. When we use Lemma $\ref{Henriot_use}$, we have
$$ \sum_{\substack{c^2+d^2 \leq \frac{x}{e^{\ell}} \\ d'|c \\ P^+(c^2+d^2)\leq e^{\ell+1} }} r_0^{k-2}(c^2+d^2)\ll_k \frac{x/e^{\ell}}{\log x/e^{\ell}} \frac{(\ell+1)^{2^{k-1}}}{d'^{\frac{1}{2}}} \ll_k \frac{x/e^{\ell}}{\log x} \frac{\ell^{2^{k-1}}}{d'^{\frac{1}{2}}} ,  $$
where we use $e^{\ell}\leq x^{1/3}$.
\noindent Let $K=\binom{k}{2}$ again, then,
\begin{align*}
\sum_{\substack{(\vect{m},\vect{n})\in \CA(\frac{x}{e^{\ell}}) \\ P^+(N)\leq e^{\ell+1} \\ \ell_2 |R(\vect{m}, \vect{n})}} 1 &\leq \sum_{\prod_{i<j} d_{\{i,j\}} =\ell_2} \prod_{i<j} \bigg( \sum_{\substack{(\vect{m}, \vect{n})\in \CA(X) \\ P^+(N) \leq e^{\ell+1}\\ d_{\{i,j\}} | R(m_i , m_j , n_i , n_j) }} 1  \bigg)^{1/K} \\
&\ll_k \sum_{\prod_{i<j} d_{\{i,j\}}=\ell_2} \prod_{i<j} \bigg( \frac{x/e^{\ell}}{\log x}\frac{\ell^{2^{k-1}}}{d_{\{i,j\}}^{1/32}} \bigg)^{1/K} \\
&= \frac{\tau_K(\ell_2)}{\ell_2^{\frac{1}{32K}}} \cdot \frac{x/e^{\ell}}{\log x} \ell^{2^{k-1}},
\end{align*}
analogously to $M_2$. Putting this in $M_2'$, we get
\begin{align*}
M_2' &\ll_k \frac{x/e^{\ell}}{\log x} \ell^{2^{k-1}} \sum_{\ell_2} \frac{\mu^2(\ell_2)(4k)^{\omega(\ell_2)} \tau_K(\ell_2)}{\ell_2^{1+\frac{1}{32K}}} \\
&=\frac{x/e^{\ell}}{\log x}\ell^{2^{k-1}} \prod_p \bigg(1+ \frac{4k\binom{k}{2}}{p^{1+\frac{1}{32K}}} \bigg) \ll_k \frac{x/e^{\ell}}{\log x}\ell^{2^{k-1}}.
\end{align*}
This concludes the proof of the upper bound in Theorem $\ref{main_result}$.
\section{Lower Bound}\label{ninth_section}
In this Section, we will prove the lower bound for $\CS_k$ in Theorem $\ref{main_result}$. Let $0<u\leq v<1$ and define
$$ \CB( x^u, x^v) := \{ (\vect{m}, \vect{n})\in \CA ( x^v ) : x^u \leq N \leq x^v , N \text{ squarefree} \}. $$
Then our lower bound in $\eqref{lower_bound_equation}$ is equal to
$$\CS_k \geq \sum_{(\vect{m}, \vect{n})\in \CB(x^{\frac{1}{3}} , x^{\frac{3}{8}})} \bigg( f_k \bigg( \frac{x}{N},  (\vect{m},\vect{n}) \bigg) - f_k(x^{\frac{1}{2}},  (\vect{m},\vect{n})) \bigg).$$
We remember the summands are $\geq 0$, so we further reduce the sum to the one over admissible representations $(\vect{m},\vect{n})$ and obtain
$$\CS_k \geq \sideset{}{^*}\sum_{(\vect{m}, \vect{n})\in \CB(x^{\frac{1}{3}} , x^{\frac{3}{8}})} \bigg( f_k \bigg( \frac{x}{N} ,  (\vect{m},\vect{n}) \bigg) - f_k(x^{\frac{1}{2}},  (\vect{m},\vect{n})) \bigg),$$
where $\sum\nolimits^*$ stands for a sum over admissible representations $(\vect{m},\vect{n})$. Now, we use Conjecture $\ref{conjectural_lower_bound}$ for $f_k(\frac{x}{N},  (\vect{m},\vect{n}))$ and Lemma $\ref{sieve_bound}$ for $f_k(x^{\frac{1}{2}},  (\vect{m},\vect{n}))$. This gives
\begin{align*}
&\gg_k  \sideset{}{^*}\sum_{(\vect{m}, \vect{n})\in \CB(x^{\frac{1}{3}} , x^{\frac{3}{8}})} \mathfrak{S}(m_1,\cdots , n_k) \frac{x}{N(\log x/N)^{2k+1}} + O_k \bigg( \mathfrak{S}(m_1,\cdots ,n_k) \frac{x^{\frac{1}{2}}}{(\log x)^{2k+1}} \bigg)  \\
&=  \sideset{}{^*}\sum_{(\vect{m}, \vect{n})\in \CB(x^{\frac{1}{3}} , x^{\frac{3}{8}})} \mathfrak{S}(m_1,\cdots , n_k) \frac{x}{N(\log x/N)^{2k+1}}\bigg( 1 + O_k \bigg( \frac{N(\log x/N)^{2k+1}}{x^{\frac{1}{2}}(\log x)^{2k+1}} \bigg) \bigg) \\
&\gg_k \frac{x}{(\log x)^{2k+1}} \sideset{}{^*}\sum_{(\vect{m}, \vect{n})\in \CB(x^{\frac{1}{3}} , x^{\frac{3}{8}})}  \frac{\mathfrak{S}(m_1,\cdots , n_k)}{N}\bigg( 1 + O_k \bigg( \frac{1}{x^{\frac{1}{8}}} \bigg) \bigg) \\
&\gg_k \frac{x}{(\log x)^{2k+1}}\sideset{}{^*}\sum_{(\vect{m}, \vect{n})\in \CB(x^{\frac{1}{3}} , x^{\frac{3}{8}})} \frac{\mathfrak{S}(m_1,\cdots , n_k)}{N} ,
\end{align*}
where we use $x^{1/3}\leq N \leq x^{3/8}$ for the third line. \\
\noindent Since $(\vect{m}, \vect{n})$ is an admissible representation, then by $\eqref{S_lower_bound}$, we have $\mathfrak{S}(m_1,\cdots ,n_k)\gg_k 1$. Hence
$$ \gg_k \frac{x}{(\log x)^{2k+1}}\sideset{}{^*}\sum_{(\vect{m}, \vect{n})\in \CB(x^{\frac{1}{3}} , x^{\frac{3}{8}})} \frac{1}{N} $$
Moreover, if $N$ is a squarefree integer with $2|N$ and $r_0(N)\geq C k \log^2 k$ for some $C>0$, then $N$ has $k$ many admissible representations by Lemma $\ref{admissible_lower_bound}$. Thus, we can lower bound the condition of an admissible representation to get
\begin{align*}
\sideset{}{^*}\sum_{(\vect{m}, \vect{n})\in \CB(x^{\frac{1}{3}} , x^{\frac{3}{8}})} \frac{1}{N} &\geq \sum_{\substack{(\vect{m}, \vect{n})\in \CB(x^{\frac{1}{3}} , x^{\frac{3}{8}}) \\ 2|N \\r_0(N)\geq C k \log^2 k}} \frac{1}{N} \\
&=  \sum_{\substack{(\vect{m}, \vect{n})\in \CB(x^{\frac{1}{3}} , x^{\frac{3}{8}}) \\2|N }} \frac{1}{N} - \sum_{\substack{(\vect{m}, \vect{n})\in \CB(x^{\frac{1}{3}} , x^{\frac{3}{8}})  \\ 2|N \\ r_0(N) < C k \log^2 k}} \frac{1}{N} \\
 &=\sum_{\substack{(\vect{m}, \vect{n})\in \CB(x^{\frac{1}{3}} , x^{\frac{3}{8}})  \\ 2|N }} \frac{1}{N} + O_k \bigg(  \sum_{\substack{x^{\frac{1}{3}}\leq N\leq x^{\frac{3}{8}}\\r_0(N) < C k \log^2 k}} \frac{r_0^k(N)}{N} \bigg) ,
\end{align*}
where for the last line, in the error term, we get rid of $2|N$ and $N$ squarefree conditions and use
$$ \sum_{(\vect{m} , \vect{n})\in \CA (N)} 1 \leq (4r_0(N))^k \ll_k r_0^k(N) ,  $$
like we did in $\eqref{bound_for_representations}$.

\noindent For the error term we can bound $r_0^k(N) \ll_k 1$ to get
$$ \frac{x}{(\log x)^{2k+1}} \bigg(\sum_{\substack{(\vect{m}, \vect{n})\in \CB(x^{\frac{1}{3}} , x^{\frac{3}{8}})   \\ 2|N \text{ and }N \text{ squarefree} }} \frac{1}{N} + O_k \bigg( \sum_{\substack{x^{\frac{1}{3}}\leq N\leq x^{\frac{3}{8}} \\ r_0(N) <C k \log^2 k}} \frac{1}{N} \bigg)\bigg) .$$
Then the sum in the error term is $\ll_k \log x$. For the main term, we can bound our number of representations by
$$ \sum_{\substack{(\vect{m}, \vect{n})\in \CA(N) \\ N=m_i^2+n_i^2 , \forall i\in [k]}} 1  = \binom{4r_0(N)}{k} \gg_k r_0^k(N) $$
by $\eqref{representations_to_N}$. This also adds a condition of $r_0(N)\geq k$, but we can get rid of it the same way we did with $r_0(N)\geq C k \log^2 k$ and get the same error term. Finally, we conclude
\begin{align*}
\CS_k &\gg_k \frac{x}{(\log x)^{2k+1}} \bigg( \sum_{\substack{x^{\frac{1}{3}}\leq N\leq x^{\frac{3}{8}}  \\ 2|N }} \mu^2(N) \frac{r_0^k(N)}{N} + O_k ( \log x ) \bigg) \\
&\gg_k x (\log x)^{2^{k-1}-2k-1},
\end{align*}
where we use Lemma \ref{lower_bound_r_2} to get the last line. This completes the proof of the lower bound and hence, Theorem \ref{main_result}.

\nocite{*}
\bibliographystyle{plain}
\bibliography{Bibliography}

\end{document}